\numberwithin{equation}{section}
\theoremstyle{plain}
\newtheorem{theorem}[subsection]{Theorem}
\newtheorem{lemma}[subsection]{Lemma}
\newtheorem{prop}[subsection]{Proposition}
\newtheorem{cor}[subsection]{Corollary}
\theoremstyle{definition}
\newtheorem{remark}[subsection]{Remark}
\def\AA{\mathbb{A}}
\def\CC{\mathbb{C}}
\def\GG{\mathbb{G}}
\def\bJ{\mathbb{J}}
\def\PP{\mathbb{P}}
\def\RR{\mathbb{R}}
\def\ZZ{\mathbb{Z}}
\def\calD{\mathcal{D}}
\def\calE{\mathcal{E}}
\def\calH{\mathcal{H}}
\def\calL{\mathcal{L}}
\def\calM{\mathcal{M}}
\def\calP{\mathcal{P}}
\newcommand\cE{\mathcal{E}}
\def\bJ{\mathbf{J}}
\newcommand{\gr}{\textup{gr}}
\newcommand\rk{\textup{rk}}
\newcommand\Sp{\textup{Sp}}
\newcommand{\Gm}{\GG_m}
\newcommand\quash[1]{}
\renewcommand\a\alpha
\renewcommand\b\beta
\newcommand\g\gamma
\renewcommand\d\delta
\newcommand{\El}{\textup{El}}
\newcommand{\MC}{\textup{MC}}
\newcommand{\prim}{\textup{prim}}
\newcommand{\para}{{\rm par}}
\newcommand{\specialcell}[2][c]{%
  \begin{tabular}[#1]{@{}c@{}}#2\end{tabular}}
\title{Irregular Hodge numbers for Rigid $G_2$-Connections}
\author{Konstantin Jakob}
\thanks{K.J. is supported by the DFG Research Fellowship JA 2967/1-1.}
\address{(K.J.) Massachusetts Institute of Technology, Department of Mathematics, 77 Massachusetts Ave, MA 02139, USA}
\email{kjakob@mit.edu}
\author{Stefan Reiter}
\address{(S.R.) Universität Bayreuth, Mathematisches Institut, Lehrstuhl für Zahlentheorie, 95440 Bayreuth, Germany}
\email{stefan.reiter@uni-bayreuth.de}
\subjclass[2010]{20G41, 34M35, 32S35}
\keywords{Rigid connections, rigid local systems, irregular Hodge filtration, variations of Hodge structure, Fourier transform, middle convolution.}
\begin{document}

\begin{abstract} Certain rigid irregular $G_2$-connections constructed by the first-named author are related via pullbacks along a finite covering and Fourier transform to rigid local systems on a punctured projective line. This kind of property was first observed by Katz for hypergeometric connections and used by Sabbah and Yu to compute irregular Hodge filtrations for hypergeometric connections. This strategy can also be applied to the aforementioned $G_2$-connections and we compute jumping indices and dimensions for their irregular Hodge filtrations.
\end{abstract}
 
\maketitle

% \newpage
% \tableofcontents
\section{Introduction}
Initiated by Deligne \cite{De07}, the goal of irregular Hodge theory is to provide an analogue of the theory of variations of Hodge structures in the context of irregular singular differential equations.
These equations are of interest in several areas of mathematics ranging from mirror symmetry to the geometric Langlands program. \par

In analogy to the case of rigid local systems for which Simpson proves in \cite[Corollary 8.1]{Sim90} that they underlie a complex variation of Hodge structure (provided their local monodromy has eigenvalues with absolute value one), 
Sabbah proves in \cite[Theorem 0.7]{Sa18} that an irreducible rigid irregular connection can be equipped with a canonical irregular Hodge filtration (if the eigenvalues of the formal monodromy at every singularity have absolute value one). \par

Construction and classification of $G_2$-connections goes back to the work of Dettweiler and the second-named author \cite{DR10} who classified tamely ramified rigid $G_2$-local systems. The classification is carried out explicitly and relies on the work of Katz on rigid local systems \cite{Ka96}. He defines an operation called middle convolution and proves that any rigid local system may be constructed from a local system of rank one using twists with rank one local systems and middle convolution. The classification of \cite{DR10} lead in particular to the construction of a family of motives for motivated cycles with motivic Galois group $G_2$. \par
The work of Katz was generalised by Arinkin \cite{Arinkin10} (and Deligne) who proved that any rigid irregular connection may be constructed from a rank one connection if one allows the additional operation of Fourier transform. Using these results the classification of tamely ramified rigid $G_2$-connections was generalised by the first-named author in \cite{Ja20} to irregular $G_2$-connections with slope of the form $1/k$ for some positive integer $k$. \par
In the case of rigid local systems the work of Dettweiler and Sabbah \cite{DS13} provides an algorithmic way to compute the Hodge data of a rigid local system provided one knows how to construct it from rank one using middle convolution. Unfortunately, in general in the irregular case it is still unknown how the irregular Hodge filtration changes under Fourier transform. Therefore one has to make use of other tools to compute irregular Hodge filtrations for rigid connections. \par
First explicit results for Hodge filtrations of hypergeometric connections have been obtained by Sevenheck and Domínguez in \cite{CDS19}. 
These results were extended by Sabbah and Yu to arbitrary non-resonant hypergeometric connections in \cite{SY19}, using a result of Katz \cite[Theorem 6.2.1.]{Ka90} which relates irregular hypergeometric connections to regular singular hypergeometric connections via pullbacks and Fourier transform. Additionally in \cite{FSY18} irregular Hodge numbers for symmetric powers of Kloosterman connections are computed. Apart from these results explicit computations of irregular Hodge filtrations remain rare. \par
The goal of this article is to prove that a similar stability property holds for non-hypergeometric rigid irregular $G_2$-connections constructed in \cite{Ja20} and to compute their irregular Hodge filtrations and expand the list of computable examples of irregular Hodge filtrations. \par

\subsection{Results}
Consider the rigid irregular $G_2$-connections with local data at $0$ and $\infty$ given by
\begin{center}
\begin{tabular}{ c c }
$0$ & $\infty$ \\
\hline \\
$(\bJ(3),\bJ(3),1)$ & \specialcell[c]{$\El( 2,1,(\alpha,\alpha^{-1}))$ \\ $\oplus\, \El(2,2,1) \oplus (-1)$} \\ [15pt]
$(-\bJ(2),-\bJ(2),E_3) $ & \specialcell[c]{$\El( 2,1,(\alpha,\a^{-1}))$ \\ $\oplus\, \El(2,2,1)\oplus (-1)$} \\ [15pt]
$(-\beta E_2,-\beta^{-1}E_2,E_3)$ & \specialcell[c]{$\El( 2,1,(\a,\a^{-1}))$ \\ $\oplus\, \El(2,2,1)\oplus (-1)$} \\
\hline \\
$(\bJ(3),\bJ(2), \bJ(2))$ & \specialcell[c]{$\El(2,1,1) \oplus \El(2,t,1)$ \\ $\oplus\,\El(2,t+1,1) \oplus (-1)$} 
\end{tabular}
\end{center}
with $t \in \CC^\times$, $\alpha=\exp(-2\pi i a), \beta=\exp(-2\pi i b)$ and $a,b\in (1/2,1)$, constructed in \cite[Theorem 1.1.]{Ja20}.  Here by $x\bJ(s)$ we denote a Jordan block of size $s$ with eigenvalue $x$ and 
\[\El(2,2,1)=\El(u^2,2/u,1)\]
is an elementary module in the sense of \cite{Sa08}, Section 2. \par
Denote the above connections by $\calE_1, \calE_2, \calE_3$ and $\calE_4$, numbered from top to bottom. 
By \cite[Theorem 0.7]{Sa18} these connections are equipped with a canonical irregular Hodge filtration $F_\textup{irr}^\beta$. This is a decreasing filtration (indexed by real numbers) of the fiber
\[H_i=\calE_i|_{z=1} \]
which is a finite dimensional $\CC$-vector space. It is only well-defined up to a global shift of the index. We denote by 
\[\gr_{F_{\textup{irr}}}^\beta(H_i):=F_{\textup{irr}}^\beta / \sum_{\gamma > \beta} F_{\textup{irr}}^\gamma \]
the associated graded. The set of indices $\beta$ for which $\gr_{F_{\textup{irr}}}^\beta(H_i) \neq 0$ is finite and these indices are called the jumping indices (or jumps) of the filtration. Our main result is the following.
\begin{theorem} \label{thm: irreg hodge} Denote by $F_{\textup{irr}}$ the irregular Hodge filtration and let $d_\alpha^p(-)=\dim \gr_{F_\textup{irr}}^{\alpha+p}(-)$.  We have the following jumping indices and irregular Hodge numbers (up to a global shift) for $\calE_1$, $\calE_2$ and $\calE_4$.
 \[  
 \begin{array}{cc}
     p &   d_{1/2}^p\\
      \hline
     0 & 2  \\
     1 & 3  \\
     2 & 2
    \end{array} \]

For $\calE_3$ we have the following cases.

\[    \begin{array}{ccc}
     p     & d^p_{2b} &d^p_{1}\\
      \hline
     0 & 2  & 0 \\
     1 & 2 & 3 
    \end{array},\quad 3/4=b< a, \] 
    \[     \begin{array}{cccccc}
     p    &  d^p_{2b}&  d^p_{1} \\
      \hline  
     0 & 2  & 1 & \\
     1 & 2  & 1 &  \\
     2 & 0  & 1
    \end{array},\quad 3/4=b > a  
\]   

\[     \begin{array}{ccccc}
     p   &  d^p_{2b} &  d^p_{2(1-b)+1} &  d^p_{1} \\
      \hline  
     0 & 1 &1 & 1\\
     1 & 1 & 1 & 1 \\
     2 & 0 & 0 & 1
    \end{array},\quad b > a, b\neq 3/4
\]
\[    \begin{array}{cccc}
     p &  d^p_{2b} &  d^p_{2(1-b)+1}&d^p_{1}\\
      \hline
     0 & 2 & 0 & 0 \\
     1 & 0 & 2 & 3 \\
 
    \end{array},\quad 1/2<b< a, b\neq 3/4 \]

\end{theorem}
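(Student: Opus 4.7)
The plan is to adapt the Sabbah--Yu strategy \cite{SY19} for the irregular Hodge filtration of hypergeometric connections to the rigid $G_2$-connections $\calE_i$. The crux is to prove a Katz-type stability: to exhibit, for each $\calE_i$, a finite cover $\pi\colon \Gm \to \Gm$ (expected to be the squaring map, in view of the slope-$1/2$ elementary modules at $\infty$) and a regular singular rigid local system $\calL_i$ on some $\PP^1\setminus S$ such that $\pi^*\calE_i$ is isomorphic to $\textup{FT}(\calL_i)$ up to a rank-one twist. Once this is in place, the irregular Hodge filtration on $\calE_i$ is computed by: (i) computing the Hodge filtration on $\calL_i$ via the Dettweiler--Sabbah algorithm \cite{DS13}; (ii) transferring through Fourier transform by the Sabbah--Yu prescription; and (iii) descending through the finite pullback $\pi$.

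For the first step of identifying $\calL_i$, the local data of $\calL_i$ at each point of $S$ is recovered by stationary phase applied to the formal local data of $\calE_i$. The elementary modules $\El(2,1,(\alpha,\alpha^{-1}))$, $\El(2,2,1)$, $\El(2,t,1)$, $\El(2,t+1,1)$ at $\infty$ and the tame blocks $\bJ(s)$ at $0$ dictate the positions and local monodromy eigenvalues of $\calL_i$ at each singularity, and the resulting system is rigid by an index-of-rigidity computation; its construction from a rank-one local system via middle convolutions and rank-one twists then follows from Katz's algorithm \cite{Ka96}.

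Next I would compute the Hodge numbers of each $\calL_i$ by running the Dettweiler--Sabbah algorithm along its middle-convolution construction. The normalization is fixed up to a global shift by the condition that $\calL_i$ underlies a polarized complex variation of Hodge structure; the remaining input is a choice of Hodge type at each local monodromy eigenvalue on the unit circle. This is the step where the case distinctions in the theorem arise: for $\calE_3$, different orderings among $a$, $b$, $3/4$, $1/2$ force different local Hodge types on $\calL_3$, and the algorithm then outputs the four distinct tables.

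Finally, I would transfer the Hodge filtration back through steps (ii) and (iii). Sabbah--Yu's formulas state that $\textup{FT}(\calL_i)$ carries an irregular Hodge filtration whose jumps are shifts of the Hodge jumps of $\calL_i$, the shifts being read off from the local exponents of $\calL_i$ at the singularities producing the irregular part: the jumps $2b$ and $2(1-b)+1$ in the $\calE_3$ tables correspond to the characters $\alpha,\alpha^{-1}$, while the jump $1/2$ in the tables for $\calE_1,\calE_2,\calE_4$ is the shift $1$ coming from the slope-$1$ elementary summand on $\pi^*\calE_i$, halved by descent along the quadratic pullback. The main obstacle will be step (ii): one has to verify in each regime that the non-resonance hypotheses of Sabbah--Yu are satisfied (or else handle the resonant contributions by hand), and to match the local exponents of $\calL_i$ with the correct shifts on the Fourier transform side. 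This bookkeeping across the Fourier transform is precisely what produces the four regimes for $\calE_3$.
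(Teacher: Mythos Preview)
Your overall plan matches the paper's strategy, but there are two substantive gaps.

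First, the mechanism you propose for descent through the quadratic pullback is wrong. You claim the jump $1/2$ for $\calE_1,\calE_2,\calE_4$ arises as a slope-$1$ shift on $[2]^*\calE_i$ ``halved by descent along the quadratic pullback.'' But \cite[Theorem~0.3]{Sa18} says that pullback by a smooth morphism preserves the jumping indices and ranks of the irregular Hodge filtration exactly---nothing is halved. In the paper's argument the jump $1/2$ (resp.\ $2b$, $2(1-b)+1$, $1$ for $\calE_3$) is read off directly from the local monodromy eigenvalues at $\infty$ of the regular singular system $\calL_i$ via the stationary phase formula $\dim\gr^{\alpha+p}_{F_\textup{irr}} = \nu^p_{\infty,\alpha}(\calL_i)$; no rescaling under descent occurs.

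Second, and more seriously, you underestimate the resonance obstruction. You say you will ``verify \dots\ non-resonance \dots\ or else handle the resonant contributions by hand,'' but for all four $\calE_i$ the na\"ive candidate $\calL_i=[2]^*\calP_i$ has eigenvalue $1$ in its local monodromy at $\infty$, so the Sabbah--Yu stationary phase formula \cite[Section~5,~(7)]{SY19} never applies directly. The paper's key device is to replace $[2]^*\calP_i$ by $\MC_\chi([2]^*\calP_i)$ for a generic $\chi$ (or $\chi=-1$), which pushes all eigenvalues at $\infty$ away from $1$. On the Fourier side, middle convolution becomes a Kummer twist, so $FT(j_{!*}\MC_\chi([2]^*\calP_i))\cong j_{!*}([2]^*\calE_i\otimes K_\chi)$, and the extra twist only shifts the filtration globally by \cite[Lemma~2.3]{SY19}. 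This is the step that makes the computation go through; without it, you have no route to the numbers in the tables. The four regimes for $\calE_3$ then arise not from orderings at the stationary-phase step but already in the local Hodge data $\nu^p_{\infty,\bullet}$ of $\MC_\chi([2]^*\calP_{13})$, which depend on the relative position of $a$ and $b$ (Proposition~\ref{E1E3conv}).
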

The connections above are not all that appear in \cite[Theorem 1.1.]{Ja20}. There are six more rigid $G_2$-connections with the following local data
\begin{center}
\begin{tabular}{ c c }
$(iE_2,-iE_2,-E_2,1)$ & \specialcell[c]{$\El(3,\alpha,1)$ \\ $\oplus\,\El(3,-\alpha,1)\oplus(1)$} \\ [15pt]
\hline \\
$\bJ(7)$ & $\El(6,\alpha_1, 1)\oplus(-1)$ \\ [10pt]
$(\varepsilon\bJ(3), \varepsilon^{-1}\bJ(3),1)$ & $\El(6,\alpha, 1)\oplus(-1)$ \\ [10pt]
$(z\bJ(2), z^{-1}\bJ(2), z^2, z^{-2},1)$ & $\El(6,\alpha, 1)\oplus(-1)$ \\ [10pt]
$(x\bJ(2),x^{-1}\bJ(2), \bJ(3))$ & $\El(6,\alpha, 1)\oplus(-1)$ \\ [10pt]
$(x,y,xy,(xy)^{-1},y^{-1},x^{-1}, 1)$ & $\El(6,\alpha, 1)\oplus(-1)$ \\ [10pt]
\end{tabular}
\end{center}
where $\alpha\in \CC^\times$. However it is easy to see that the last five connections are hypergeometric connections (their Euler characteristic on $\Gm$ is $-1$) and that the first in the list is the pull-back of a hypergeometric connection. Therefore Theorem $1$ of \cite{SY19} can be applied to compute their Hodge numbers. \par
\subsection{Outlook}
In \cite{FSY18} the authors use irregular Hodge theory to prove meromorphic continuation for $L$-functions associated to symmetric power moments of Kloosterman sums. This is done by computing the Hodge numbers of symmetric powers of Kloosterman connections which turn out to be zero or one. The same property for Hodge numbers is true for the $G_2$-connection $\cE_3$ in the case $b>a$ and $b \neq 3/4$ in which the irregular Hodge filtration is of maximal length with every graded quotient being one dimensional. \par
Already in the tame case Dettweiler and Sabbah \cite{DS13} use explicit computations of Hodge numbers to prove potential automorphy for a family of Galois representations attached to a tamely ramified rigid $G_2$-local system (which implies meromorphic continuation for the associated $L$-function). Both of these approaches rely on a potential automorphy criterion of Patrikis and Taylor \cite{PT15}. \par 
In the future we hope to relate the rigid irregular $G_2$-connections above to exponential motives in a similar fashion as done in \cite{FSY18} for Kloosterman connections. Since the irregular $G_2$-connection $\calE_3$ arises from a rigid local systems via pull-back and Fourier transform one may hope that one can furthermore relate these exponential motives to classical motives and finally prove a similar result for $L$-functions attached to those - using the maximality of the Hodge filtration. 
%We expect that the rigid irregular connection $\calE_i$ is in some sense exponentially motivic. By this we mean that there should exist a smooth variety $X$ and a morphism $g:X\rightarrow \AA^1$ such that $\calE_i$ is cut out of the cohomology of the exponential connection $\calE^f=f^*\calE$ where $\calE$ is the connection on the trivial bundle given by $d+tdt$. We prove that $\calE_i$ is related to a rigid local system $\calL$ via pull-back and Fourier transform. Since $\calL$ will be motivic because of its construction through the Katz algorithm \cite[Chapter 6]{Ka90} the expectation is that the motive of $\calE_i$ is actually classical and related in a precise sense to the motive of $\calL$. \par
%

\subsection*{Acknowledgement} Studying the irregular Hodge filtration of rigid connections was suggested to us by Claude Sabbah and we thank him for that. We wish to thank him, Javier Fresán and Jeng-Daw Yu for helpful conversations on irregular Hodge filtrations. In addition we wish to thank Claude Sabbah and Michael Dettweiler for comments that helped improve a preliminary version of this article.

\subsection{Strategy} The crucial point for computing the irregular Hodge filtration is a certain stability that was observed first by Katz in \cite[Section 6.2.]{Ka90} for hypergeometric differential equations. Let $\calH$ be a confluent hypergeometric connection of type $(n,m), n>m$ on $\Gm$. Let $d=n-m$ and denote by $[d]$ the $d$-fold covering of $\PP^1$. For sufficiently generic parameters there exists a regular singular hypergeometric connection $\calH'$ such that the Fourier transform of $[d]^*\calH$ is isomorphic to $[d]^*\calH'$. This property was used in \cite{SY19} by Sabbah and Yu to compute irregular Hodge filtrations for hypergeometric connections. \par
We will prove a similar stability for the above mentioned $G_2$-systems and adopt the strategy of Sabbah and Yu to compute the Hodge filtrations. More precisely we proceed in the following steps. \par
First we construct rigid local systems $\calL$ for which the Fourier transform of a pullback along $[k]$ for some positive integer $k$ will give the rigid irregular connections $\calE_i$. In the case of hypergeometrics in \cite{SY19} the authors used a result of Fedorov \cite{Fe18} who computed the Hodge data for hypergeometric local systems on $\PP^1 \setminus \{0,1,\infty\}$. In our case no such result is available. We therefore have to explicitly compute the Hodge data of $[k]^*\calL$ using results from \cite{DS13} and \cite{DR19} and a result on pullbacks of variations of Hodge structures in \cite[Section 4]{SY19}. The construction of $\calL$ and computation of the Hodge data is carried out in Section \ref{s: hodge data}.  \par
The next step is to prove that the aforementioned stability actually holds. We do this via an explicit computation with differential operators in Section \ref{s: operators}. The crucial point here is that we know an explicit algorithmic construction of $\calE_i$ in terms of the Katz-Arinkin algorithm \cite{Arinkin10}, i.e. via middle convolution and Fourier transform. \par
Finally we want to apply the stationary phase formula \cite[Section 5, (7)]{SY19} to compute the irregular Hodge filtrations. Unfortunately in all cases the local monodromy at infinity will have eigenvalue one and stationary phase is not directly applicable. We can avoid this by applying a suitable middle convolution to make all eigenvalues non-trivial. This is done directly and explicitly in terms of differential operators. \par
The Fourier transform will transform the middle convolution into a twist with a rank one local system. By \cite[Lemma 1]{SY19} such a twist will only change the Hodge filtration by a global shift and we can conclude.

\subsection{Notations and preliminary results} Here we recall some notation from \cite{DS13}. 
\subsubsection{Local Hodge data} Let $\Delta$ be a disc and $j:\Delta^*\hookrightarrow \Delta$ the open punctured disc. 
Let $(V, F^\bullet V, \nabla, k)$ be a variation of polarized complex Hodge structure on $\Delta^*$ defined as in \cite{DS13}, Section~2.1. We will often denote this only by $V$. \par 
For any real number $a$ there is an extension $V^a$ of $V$ to $\Delta$ called the Deligne canonical lattice. It is characterised by the property that the eigenvalues of the residue of $\nabla$ lie in $[a,a+1)$. Letting $V^{>a}=\bigcup_{a'>a} V^{a'}$ the residue of $V^{>a}$ has eigenvalues in $(a,a+1]$. For $a\in [0,1)$ and $\lambda=\exp(-2\pi i a)$ we define the space of nearby cycles
\[\psi_a(V):=V^a/V^{>a}.\] 
This space comes equipped with the nilpotent endomorphism $N=-(t\partial_t-a)$ which induces a monodromy filtration on it. For any $\ell \in \ZZ_{\ge 0}$ there is the space of primitive vectors $P_\ell\psi_a(V)$ with respect to the monodromy filtration. Its dimension is the number of Jordan blocks of size $\ell+1$ for the action of $N$ on $\psi_a(V)$. \par
Defining $F^pV^a:= j_*F^pV\cap V^a$ we get a Hodge filtration $F^p\psi_a(V):=F^pV^a / F^pV^{>a}$ on $\psi_a(V)$. This further induces a filtration on $P_\ell\psi_a(V)$ and we refer to  \cite{DS13}, Section~2.2 for details. \par
We define
the local  Hodge numbers
\[  \nu_{a,l}^p(V):=\dim \gr^p_F {\rm P}_l\psi_{a}(V), \]
\[ \nu_{a,{\rm prim}}^p(V):=\sum_{l\geq 0}\nu_{a,l}^p(V) \]
\[ h^p(V):=\nu^p(V):=\sum_{a\in [0,1)}\nu_{a}^p(V).\]
\subsubsection{Hodge data on $\AA^1\setminus S$} Now let $(V, F^\bullet V, \nabla, k)$ be a variation of polarized complex Hodge structure on $\AA^1 \setminus S$. To indicate the local Hodge data of $V$ at the point $x\in S\cup \infty$ we write $\nu_{x,a,l}^p(V)$ and so on. \par
As in \cite{DS13}, Section~2.3 we define global Hodge numbers 
\[ \delta^p(V)=\deg \gr^p_F V^0 \]
and we also make use of
the following further local Hodge numbers defined in \cite{DR19}, Section 2,
\begin{eqnarray*} \omega^p_{x}(V) &:=&\nu^p_{x}(V)-\nu^p_{x,0,\prim}(V)  \\
  \omega^p_{\neq \infty }(V) &:=&\sum_{x\in S \setminus \infty}\omega^p_{x}(V)  \\
     \omega^p_{}(V) &:=&\sum_{x\in S } \omega^p_{x}(V). \\
\end{eqnarray*}
By \cite[Corollary 8.1]{Sim90} any  rigid local system $\calL$ on $\PP^1\setminus S$ (for $S$ finite) underlies a variation of polarized complex Hodge structure $(V,F^\bullet B, \nabla, k)$ on $\PP^1\setminus S$
(provided their local monodromy has eigenvalues with absolute value one). A general result of Deligne \cite[Prop. 1.13]{De87} implies that any such local system underlies at most one such variation (up to a shift of the Hodge filtration). 
When working with  rigid local systems we will often abuse notation 
and write for example $\nu_{x,a,l}^p(\calL)$ instead of $ \nu_{x,a,l}^p(V)$ to denote the corresponding Hodge invariant of the associated variation of Hodge structure. For readability we may sometimes even drop the $V$ completely if it is clear from the context. \par
To compute the global Hodge numbers for Kummer pullbacks we make use of the following Lemma.

\begin{lemma}\label{lem: global hodge}
Let $f:\PP^1 \rightarrow \PP^1$ be given by $f(t)=t^k$, $k\in \ZZ_{\ge 1}$, and $V$ a variation of polarized complex Hodge structure on $\PP^1\setminus S$ for some finite set $S$. The pullback $f^*V$ is again a variation of polarized complex Hodge structure. Then
\[\delta^p(f^*V)=k\delta^p(V)+\sum_{b \in [0,1)} \lfloor k b \rfloor(\nu_{0, b}^p+\nu_{\infty,b}^p).\]
\end{lemma}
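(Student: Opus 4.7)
The plan is to express $\delta^p(f^*V) = \deg \gr^p_F (f^*V)^0$ by comparing the Deligne canonical lattice $(f^*V)^0$ with the naive pullback $f^*V^0$. Since $f$ is \'etale outside of $\{0,\infty\}$, the two extensions of $f^*V$ agree on $\PP^1 \setminus \{0,\infty\}$, so they differ only by a locally free modification supported at those two points. Since $f^*$ multiplies degree by $k$ and is compatible with the Hodge filtration of a variation of Hodge structure away from the ramification locus,
\[\deg \gr^p_F f^*V^0 \;=\; k\,\delta^p(V),\]
and the task reduces to computing the contributions at $x \in \{0,\infty\}$ to the difference $\deg \gr^p_F (f^*V)^0 - \deg \gr^p_F f^*V^0$.

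For the local calculation, fix a coordinate $t$ at $x$ so that $f$ is locally given by $t \mapsto t^k$. Since $t\partial_t$ pulls back to $k\, t\partial_t$, the generalized $b$-eigenspace of the residue of $\nabla$ on $V^0$ at $x$, of dimension $\dim \psi_b V$, becomes a generalized $kb$-eigenspace of the residue on $f^*V^0$ at $x$. To obtain the canonical extension, whose residue eigenvalues must lie in $[0,1)$, one multiplies this eigenspace by $t^{-\lfloor kb\rfloor}$; the effect on the lattice is an enlargement by a locally free $\calO_x$-module of length $\lfloor kb\rfloor \cdot \dim \psi_b V$. The defining property $F^p V^0 = j_* F^p V \cap V^0$ together with strictness of $F^\bullet$ on $\psi_b V$ ensures that this modification respects the filtration, so the enlargement of $F^p (f^*V)^0$ over $F^p f^*V^0$ at $x$ has length $\lfloor kb\rfloor \cdot \dim F^p \psi_b V$. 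Passing to the graded and summing over $b \in [0,1)$, the contribution at $x$ is precisely
\[\sum_{b \in [0,1)} \lfloor kb\rfloor\, \nu^p_{x,b}(V).\]
Adding the contributions at $x = 0$ and $x = \infty$ to $k\,\delta^p(V)$ yields the claimed formula.

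The main obstacle is the precise justification that the local modification from $f^*V^0$ to $(f^*V)^0$ is diagonal with respect to the Hodge filtration on the residue-eigenspace decomposition; i.e.\ that the length in $F^p$ of the enlargement really is $\lfloor kb\rfloor \cdot \dim F^p\psi_b V$ and does not mix Hodge levels. This rests on the decomposition of $V^0$ near $x$ according to the eigenvalues of the semisimple part of the residue and on the compatibility of $F^\bullet V^0$ with this decomposition, which is the content of the nearby-cycle formalism recalled in Section 2.1 and applied in \cite[Section~4]{SY19}. Once this compatibility is in hand, the Kummer case is essentially a bookkeeping of the shifts $\lfloor kb\rfloor$, and the formula follows.
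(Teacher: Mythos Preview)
Your approach is correct and genuinely different from the paper's. The paper does not compare $(f^*V)^0$ with $f^*V^0$; instead it invokes \cite[Lemma~2]{SY19}, which gives a filtered decomposition
\[
(f^*V)^0 \;\cong\; \bigoplus_{j=0}^{k-1} y^j \otimes V^{-\mathbfit{j}/\mathbfit{k}},
\]
where $V^{-\mathbfit{j}/\mathbfit{k}}$ is the extension using the lattice $V^{-j/k}$ at $0$ and $\infty$ and $V^0$ elsewhere. One then reads off $\deg\gr_F^p$ of each summand from the short exact sequence $0\to V^0\to V^{-\mathbfit{j}/\mathbfit{k}}\to \bigoplus_{b\in[-j/k,0)}\psi_b\to 0$, and the combinatorial step at the end counts, for each $b$, how many of the intervals $[1-j/k,1)$ contain it --- which is exactly $\lfloor kb\rfloor$. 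The virtue of this route is that the Hodge compatibility you flag as the ``main obstacle'' is already packaged into the cited lemma: the decomposition is filtered, so no separate argument is needed. Your argument, by contrast, is more self-contained and makes the geometry transparent (pullback multiplies degree by $k$, then correct the residue eigenvalues $kb$ back into $[0,1)$), but it requires you to check directly that the elementary modification $t^{-\lfloor kb\rfloor}$ on each residue-eigenspace is strict for $F^\bullet$. That check is not hard --- it follows from the fact that $F^pV^a = j_*F^pV\cap V^a$ is compatible with the eigenspace decomposition of $V^a/V^{>a}$ and that the shift is by a monomial --- but it is exactly the content that the paper outsources to \cite{SY19}.
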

\begin{proof} Denote by $V^{-\mathbfit{j}/\mathbfit{k}}$ the extension to $\PP^1$ of $V$ given by the Deligne canonical lattices $V^{-{j}/{k}}$ at $0$ and at $\infty$ and by $V^0$ at the other singularities (see \cite{DS13}, Section~2.2). We have an exact sequence 
\[0 \rightarrow V^0 \rightarrow V^{-\mathbfit{j}/\mathbfit{k}} \rightarrow V^{-\mathbfit{j}/\mathbfit{k}} / V^0\rightarrow 0 \]
where 
\[V^{-\mathbfit{j}/\mathbfit{k}} / V^0\cong \bigoplus_{b\in [-j/k,0)} \psi_{b,t}(V)\oplus \psi_{b,t^{-1}}(V)\]
 and everything is compatible with the Hodge filtration. By \cite[Lemma 2]{SY19} we have
 \[(f^*V)^0=\bigoplus_{j=1}^{k-1} y^j\otimes V^{-\mathbfit{j}/\mathbfit{k}}\] 
 and hence 
 \begin{align*}\delta^p(f^*V)=\deg\gr_F^p(f^*V)^0&=\sum_{j=1}^{k-1}\left( \deg\gr_F^pV^0 +\sum_{b\in [-j/k,0)} \nu_{0,b}^p(V)+\nu_{\infty,b}^p(V)\right) \\
 &=k\delta^p(V)+\sum_{j=1}^{k-1}\sum_{b\in [1-j/k,1)} (\nu_{0,b}^p(V)+\nu_{\infty,b}^p(V)). 
 \end{align*}
 Let $b\in [0,1)$ such that $\nu_{0,b}^p(V)\neq 0$ and assume that $1-\frac{j}{k} \le b < 1-\frac{j-1}{k}$. This means that $k-j\le k b < k-j-1$ and hence $\lfloor k b\rfloor=k-j$. This is precisely the number of intervals $[1-j/k,1)$ in which $ b$ is contained. The same discussion applies to $\nu_{\infty, b}^p(V)\neq 0$ and hence 
 \[k\delta^p(V)+\sum_{j=1}^{k-1}\sum_{ b\in [1-j/k,1)} (\nu_{0, b}^p(V)+\nu_{\infty, b}^p(V))=k\delta^p(V)+\sum_{ b \in [0,1)} \lfloor k b \rfloor(\nu_{0, b}^p(V)+\nu_{\infty, b}^p(V)),\]
 proving the claim. 
\end{proof}

%
%\begin{prop} Denote by $[2]:\PP^1\rightarrow \PP^1$ the map $z\mapsto z^2$. We then have
%\[\cF([2]^*\MC_{-1}(\cL_1))\cong \cE_2\]
%where $\cF$ denotes the Fourier-Laplace transform. \blue{For an appropriate choice of singularities.}
%\end{prop}

\section{Hodge data for rigid local systems} \label{s: hodge data}
In this section we construct rigid local systems $\calP_{13}, \calP_2$ and $\calP_4$ whose pullbacks are related to the pullbacks of $\calE_1, \calE_2, \calE_3$ and $\calE_4$ via Fourier transform. 
\begin{prop}\label{E1E3}
 Let $a,b \in [1/2,1)$ and $\alpha:=\exp(-2\pi ia),\beta:=\exp(-2\pi i b)$, where $\alpha \neq \beta^{\pm 1}$.
 There exists a regular singular connection $\calP_{13}$ on 
  $\PP^1\setminus \{ 0,1,4,\infty\}$ with symplectic monodromy group and
  local monodromy
 \begin{eqnarray*}
\begin{array}{cccc}
     0 & 1 & 4 & \infty \\
     \hline
     (\bJ(2),1,1) & (\alpha,\alpha^{-1},1,1) &  (\bJ(2),1,1) & (\beta,\beta,\beta^{-1},\beta^{-1})
    \end{array}, &&\quad \alpha\neq -1,\beta\neq -1,\\
  \begin{array}{cccc}
     0 & 1 & 4 & \infty \\
     \hline
     (\bJ(2),1,1) & (-\bJ(2),1,1) &  (\bJ(2),1,1) & (\beta,\beta,\beta^{-1},\beta^{-1})
    \end{array},&& \quad \alpha=-1,\beta\neq -1,\\  
    \begin{array}{cccc}
     0 & 1 & 4 & \infty \\
     \hline
     (\bJ(2),1,1) & (\alpha,\alpha^{-1},1,1) &  (\bJ(2),1,1) & (-\bJ(2),-\bJ(2))
    \end{array}, &&\quad \alpha\neq -1, \beta= -1 
    \end{eqnarray*}
 Furthermore $\calP_{13}$ has the following Hodge data
  \begin{eqnarray*} \begin{array}{ccccc}
     p & h^p   & \delta^p   & \omega^p & \nu^p_{\infty,b,1} \\
     \hline
     0 & 2 & -2 & 5 & 0  \\
     1 & 2 & -1 & 3 & 2
    \end{array},&&\quad b=1/2, \\
    \begin{array}{cccccc}
     p & h^p   & \delta^p   & \omega^p & \nu^p_{\infty,b,0} &  \nu^p_{\infty,1-b,0}\\
     \hline
     0 & 2 & -2 & 5 & 2 & 0 \\
     1 & 2 & -1 & 3 & 0 & 2
    \end{array},&&\quad 1/2<b< a, \quad    \\
     \begin{array}{cccccc}
     p & h^p   & \delta^p   & \omega^p & \nu^p_{\infty,b,0} &  \nu^p_{\infty,1-b,0}\\
     \hline
     0 & 2 & -2 & 5 & 1 & 1 \\
     1 & 2 & -1 & 3 & 1 & 1
    \end{array},&&\quad b > a.
\end{eqnarray*}
\end{prop}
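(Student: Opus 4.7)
The plan is fourfold: exhibit $\calP_{13}$ as an iterated middle convolution starting from a rank-one local system; verify the local monodromy and rigidity; show the monodromy group preserves a symplectic form; and compute the Hodge data by running the Dettweiler-Sabbah algorithm from \cite{DS13} in parallel with the construction.

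For existence and rigidity, I would first confirm cohomological rigidity by the Katz index formula. The centralizer dimensions of the prescribed local monodromy in $\GL_4$ are $10$ at $0$ (Jordan type $(2,1,1)$ unipotent), $6$ at $1$, $10$ at $4$, and $8$ at $\infty$, and the boundary cases $\alpha=-1$ or $\beta=-1$ give the same dimensions. Thus the rigidity index is $(2-4)\cdot 16+10+6+10+8=2$, and any irreducible local system with the stated local data is rigid. By Katz's theorem such a system is produced from a rank-one local system on $\PP^1\setminus\{0,1,4,\infty\}$ via an explicit sequence of Kummer twists and middle convolutions $\textup{MC}_\lambda$; the sequence is found by running the Katz algorithm in reverse, at each stage choosing $\lambda$ so that the rank strictly decreases. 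Reversing this sequence yields a local system with the desired monodromy.

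For the symplectic property, observe that at every ramification point the multiset of eigenvalues with Jordan block sizes is invariant under $\lambda\mapsto\lambda^{-1}$. Hence $\calP_{13}^\vee$ has the same local monodromy as $\calP_{13}$, and rigidity forces $\calP_{13}\cong\calP_{13}^\vee$. This yields a non-degenerate monodromy-invariant bilinear form, necessarily either symmetric or alternating. To exclude the orthogonal case it suffices to note that the unipotent Jordan type $(2,1,1)$ occurring at $0$ and at $4$ is an admissible class in $\Sp_4$ (every odd part appears with even multiplicity) but not in $\Og_4$ (where every even part must appear with even multiplicity), so the pairing is alternating.

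For the Hodge data, I would start from the Hodge data of the rank-one local system at the bottom of the construction, which is elementary to write down from its local character data, and inductively apply the Dettweiler-Sabbah recursion for middle convolution and for twists by rank-one local systems. This updates the global invariants $h^p$ and $\delta^p$ as well as the local invariants $\nu_{x,a,\ell}^p$ from which $\omega^p_x$ is computed. The main obstacle is the case-by-case bookkeeping at $\infty$: whether $b=1/2$, $1/2<b<a$, or $b>a$ controls which nearby-cycle eigenspace lies in $F^0$ versus $F^1$, and the boundary values $\alpha=-1$ or $\beta=-1$ must be tracked separately because they collapse two semisimple eigenvalues into a single Jordan block. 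The output should be cross-checked against $\sum_p h^p=4$, the Hodge symmetry $h^p=h^{1-p}$ imposed by the symplectic pairing, and the Euler-characteristic identity relating $\sum_p \delta^p$ to $\chi(\PP^1\setminus\{0,1,4,\infty\},\calP_{13})$.
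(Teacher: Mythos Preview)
Your outline is sound and would lead to a correct proof, but the paper takes several shortcuts that avoid the full Dettweiler--Sabbah recursion you propose. For the construction and symplecticity your argument matches the paper's (the transvection/$(2,1,1)$ observation is the same point). The substantive difference is in the Hodge data. Rather than tracking $h^p,\delta^p,\nu^p_{x,a,\ell}$ step by step through the Katz construction, the paper argues as follows: the Hodge length is bounded by one more than the number of middle convolutions used, hence $\le 3$, and must be even by symplecticity, so it is $2$ with $h^0=h^1=2$; the $\omega^p$ are then read off directly from the local monodromy; and since $\calP_{13}$ is parabolically rigid ($H^1_{\textup{par}}=0$) the identity $\delta^{p-1}-\delta^p-h^{p-1}-h^p+\omega^{p-1}=0$ from \cite[Proposition~2.7]{DR19} determines $\delta^p$ without any recursion. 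For the nearby cycles at $\infty$ when $b>1/2$, the paper uses a further trick you did not anticipate: it twists $\calP_{13}$ by the rank-one systems $\mathcal{L}_b$ and $\mathcal{L}_{1-b}$ (monodromy $\exp(\mp 2\pi i b)$ at $1$ and $\infty$), observes that both twists remain parabolically rigid, and uses the resulting constraints on $\delta^0$ of the twists together with \cite[Proposition~2.3.2]{DS13} to pin down whether $\nu^0_{\infty,b,0}=2$ or $\nu^0_{\infty,b,0}=1$, which is exactly what distinguishes the cases $b<a$ and $b>a$. Your approach of running the recursion would recover the same answer but requires carrying the full local data through each step; the paper's method exploits parabolic rigidity to bypass this bookkeeping entirely.
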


\begin{proof}
Using the Katz algorithm \cite[Chapter~6]{Ka96} we construct $\calP_{13}$ via the following sequence of middle convolutions $\MC_\lambda$ and tensor products with  regular singular connections of rank 1 on 
  $\PP^1\setminus \{ 0,1,4,\infty\}$
 from a regular singular connection of rank 1 on 
  $\PP^1\setminus \{ 0,1,4,\infty\}$. 
 
 Let ${\mathcal M}$ be the regular singular connection of rank 1 on 
  $\PP^1\setminus \{ 0,1,4,\infty\}$
  with local monodromies
  \[  \begin{array}{ccccc}
     &0 & 1 & 4 & \infty \\
     \hline
    {\mathcal M} &  \alpha^{-1} & \alpha \beta & \alpha^{-1} & \alpha \beta^{-1}
    \end{array}.\]               
  Then
  \[ \MC_\beta (\MC_{\beta^{-1}\alpha} {\mathcal M} \otimes {\mathcal M}_2)={  \calP_{13}},\]
  where ${\mathcal M}_2$ is a regular singular connection of rank 1 on 
  $\PP^1\setminus \{ 0,1,4,\infty\}$
   with local monodromies
  \[  \begin{array}{ccccc}
     &0 & 1 & 4 & \infty \\
     \hline
    {\mathcal M}_2 &  1 & \alpha^{-1} \beta^{-1} & 1 & \alpha \beta^{} 
    \end{array}.\]     
%   
%   with Hodge data
%    \[ h^0=1, \nu^0_{0,0,0}=\nu^0_{1,[1-a+1-b],0}=\nu^0_{4,0,0}=\nu^0_{\infty,[b+a],0}=1.\]
   By the Katz algorithm \cite[Chapter~6]{Ka96} the local monodromy data is
   \[ \begin{array}{ccccc}
     &0 & 1 & 4 & \infty \\
     \hline
    {\mathcal M} &  \alpha^{-1} & \alpha \beta & \alpha^{-1} & \alpha \beta^{-1} \\
    \MC_{\beta^{-1}\alpha} {\mathcal M} & (1,\beta^{-1}) & (1,\alpha^2) & (1,\beta^{-1}) & (\alpha^{-1}\beta,\alpha^{-1}\beta) \\
    \MC_{\beta^{-1}\alpha} {\mathcal M} \otimes {\mathcal M}_2& (1,\beta^{-1}) & ( \alpha^{-1} \beta^{-1},\alpha \beta^{-1}) & (1,\beta^{-1}) & (\beta^2,\beta^2) \\
     \MC_\beta (\MC_{\beta^{-1}\alpha} {\mathcal M} \otimes {\mathcal M}_2)& (\bJ(2),1,1) & (\alpha,\alpha^{-1},1,1) &  (\bJ(2),1,1) & (\beta,\beta,\beta^{-1},\beta^{-1})
     \end{array}.\]
  Since the monodromy tuple of $\calP_{13}$ is linearly rigid and the eigenvalues of its local monodromies are invariant under taking
 inverses the monodromy group is selfdual.
 The transvection at $0$ implies that the monodromy group is contained in $\Sp_4$ and the Hodge length is even. % transvection im mittleren Teil

  To compute the Hodge data of $ \calP_{13}$ one can now apply the Hodge theoretic version of the Katz algorithm by Dettweiler and Sabbah in \cite{DS13}
  and determine the Hodge data in each of the above steps.
  
  However in this special situation one can shortcut the computations.
  From  \cite[Proposition~2.3.3]{DS13} one concludes that the Hodge length of $ \calP_{13}$ is at most the number of used middle convolutions which is $3$.
  Since the Hodge length is even it has to be $2$.
  Thus $h^0(   \calP_{13})=h^1(   \calP_{13})=2$ by selfduality.
  Further, since 
  \[ \sum_{s \in \PP^1} \rk(T_s- {\rm id})-2\rk(   \calP_{13})=0,\]
  where $T_s$ denotes the local monodromy of $\calP_{13}$ at $s$,
  the parabolic cohomology group $H^1_{\rm par} (\calP_{13})$ vanishes
   cf. \cite[Section~2]{DR19} , i.e. $\calP_{13}$ is parabolically rigid.
   Therefore we get the following system of equations, cf. \cite[Proposition~2.7]{DR19},
  \[ \delta^{i-1}-\delta^i-h^{i-1}-h^{i} +\omega^{i-1}=0 .\]
  The selfduality and the definition of $\omega^p$ give $\omega^0=5$ and $\omega^1=3$, hence the claim on the $\delta^p$ follows.
  
  It remains to determine the nearby cycle data at $\infty$.
  If $b=1/2$ then the local monodromy at infinity already determines $\nu^1_{\infty,1/2,1}=2$. 
  
  Let $b>1/2$ and  $c \in \{a,1-a\}$ such that $\nu^0_{\infty,c,0}=1$.
  Let further $ { \mathcal L}_x, x \in (0,1),$ be the rank $1$ connection with local monodromy
  \[  \begin{array}{ccccc}
     &0 & 1 & 4 & \infty \\
     \hline
     { \mathcal L}_x &  1 & \exp(-2\pi ix)  & 1 & \exp(2\pi ix)
    \end{array}.\]
  Then 
  \[  \calP_{13} \otimes  { \mathcal L}_b,\quad  \calP_{13} \otimes {\mathcal L}_{1-b} \]
  are both parabolically rigid.
  Thus
  \[ -2=-h^0( \calP_{13})=-h^0( \calP_{13}\otimes {\mathcal L}_b)=\delta^0(  \calP_{13} \otimes {\mathcal L}_b) \]
  \[ -2=-h^0( \calP_{13})=\delta^0(  \calP_{13} \otimes {\mathcal L}_{1-b}).  \]
  Hence if $\nu^0_{\infty,d,0}( \calP_{13})=2, d \in \{ b,1-b\},$ then by \cite[Proposition]{DS13}
  \[ \delta^0(  \calP_{13} \otimes {\mathcal L}_b)=\delta^0( \calP_{13})-h^0( \calP_{13})+\lfloor c+b \rfloor +2 \lfloor d +1-b \rfloor \]
  and
  \[\delta^0(  \calP_{13} \otimes {\mathcal L}_{1-b})=\delta^0( \calP_{13})-h^0( \calP_{13})+\lfloor c+1-b \rfloor +2 \lfloor d +b \rfloor. \]
  Thus
 \[ d+1-b\geq 1,   \quad c+b<1,   \quad c+1-b <1,   \quad d+b\geq 1.\]
  This implies
  $ b=d $ and $ c<b<1-c=a.$
  On the other hand, if $\nu^0_{\infty,b,0}( \calP_{13})=\nu^1_{\infty,b,0}( \calP_{13})=1$ then by \cite[Proposition]{DS13}
  \[ \delta^0(  \calP_{13} \otimes {\mathcal L}_{1-b})=\delta^0( \calP_{13})-h^0( \calP_{13})+\lfloor c+1-b \rfloor +2 \]
   \[ \delta^0(  \calP_{13} \otimes {\mathcal L}_{b})=\delta^0( \calP_{13})-h^0( \calP_{13})+\lfloor c+b \rfloor +1. \]
   Thus
   $ c+1-b< 1$ and   $c+b\geq 1, $
   that is
   $ a= \max\{c,1-c\}< b. $
   This shows the claim.
\end{proof}

\begin{prop} \label{E1E3conv}
Let $\calP_{13}^2:=[2]^*  \calP_{13}$
 and $\calP':= \MC_\chi\calP_{13}^2$ for $\chi=\exp(-2\pi i \mu ), \mu \in [0,1), 1-\mu \sim 0$ by which we mean that $\mu$ is generic and sufficiently close to $1$.
 Then ${\calP'}$ has the following Hodge data
 \begin{eqnarray*}
   \begin{array}{ccccc}
     p & h^p   &  \nu^p_{\infty,1-\mu,2} &   \nu^p_{\infty,1-\mu,0}\\
      \hline
     0 & 2 &  0 & 0   \\
     1 & 3 & 0 & 1 \\
     2 & 2 & 2 & 0
    \end{array},&&\quad b=1/2, \\
   \begin{array}{cccccc}
     p & h^p     & \nu^p_{\infty,2b-\mu,0} &  \nu^p_{\infty,2(1-b)+1-\mu,0}&\nu^p_{\infty,1-\mu,0}\\
      \hline
     0 & 2  & 2 & 0 & 0 \\
     1 & 5  & 0 & 2 & 3 
    \end{array},&&\quad 1/2<b< a, b\neq 3/4 \\
    \begin{array}{cccccc}
     p & h^p     & \nu^p_{\infty,2b-\mu,0} &\nu^p_{\infty,1-\mu,0}\\
      \hline
     0 & 2  & 2  & 0 \\
     1 & 5  & 2 & 3 
    \end{array},&&\quad 3/4=b< a, \\    
% \[     \begin{array}{cccccc}
%      p & h^p   & \nu^p_{\infty,2b-\mu,0}&\nu^p_{\infty,1-\mu,0} \\
%       \hline
%      0 & 3 & 2 &  1  \\
%      1 & 3 & 2 &  1 \\
%      2& 1 & 0 & 1& 
%     \end{array},\quad b< a, b=3/4 \]
    \begin{array}{cccccc}
     p & h^p   &  \nu^p_{\infty,2b-\mu,0} &  \nu^p_{\infty,2(1-b)+1-\mu,0} &  \nu^p_{\infty,1-\mu,0} \\
      \hline  
     0 & 3 & 1 &1 & 1 & \\
     1 & 3 & 1 & 1 & 1 &  \\
     2 & 1 & 0 & 0 & 1
    \end{array},&& \quad b > a, b\neq 3/4 \\
     \begin{array}{cccccc}
     p & h^p   &  \nu^p_{\infty,2b-\mu,0}&  \nu^p_{\infty,1-\mu,0} \\
      \hline  
     0 & 3 & 2  & 1 & \\
     1 & 3 & 2  & 1 &  \\
     2 & 1 & 0  & 1
    \end{array},&&\quad 3/4=b > a.
 \end{eqnarray*}
\end{prop}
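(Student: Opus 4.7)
The approach is to propagate the Hodge data of $\calP_{13}$ established in Proposition \ref{E1E3} through the two operations $[2]^*$ and $\MC_\chi$ in succession, using Lemma \ref{lem: global hodge} and the Dettweiler--Sabbah Hodge-theoretic Katz algorithm from \cite{DS13}.

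First I would compute the Hodge data of $\calP_{13}^2 = [2]^*\calP_{13}$. Pullback along a finite covering doubles all ranks, so $h^p(\calP_{13}^2) = 2h^p(\calP_{13})$. The global Hodge numbers $\delta^p(\calP_{13}^2)$ follow directly from Lemma \ref{lem: global hodge} applied to the nearby cycle data of $\calP_{13}$ at $0$ and $\infty$. For the local data at $\infty$, an eigenvalue $\exp(-2\pi i b)$ of the formal monodromy lifts under $t\mapsto t^2$ to the two eigenvalues $\exp(-2\pi i b/2)$ and $\exp(-2\pi i (b+1)/2)$, each inheriting the same Jordan and primitive Hodge structure; the points $1,4$ split into $\pm 1,\pm 2$ with duplicated local data, and $0$ remains ramified.

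Next I would apply $\MC_\chi$ to $\calP_{13}^2$. The genericity hypothesis $1-\mu \sim 0$ with $\mu$ generic ensures that $\chi^{\pm 1}$ is in generic position with respect to all local eigenvalues of $\calP_{13}^2$ at the finite singularities, so the generic-case DS13 transformation rules apply. These yield $h^p(\calP')$ from $h^{p-1}(\calP_{13}^2)$ plus local correction terms, and produce the new nearby cycle data at $\infty$ at eigenvalues of the form $\exp(-2\pi i(c-\mu))$ (for each original local eigenvalue $\exp(-2\pi i c)$ at $\infty$ of $\calP_{13}^2$) as a direct translate of the corresponding primitive data. Since $\calP_{13}$ is parabolically rigid, so is $\calP_{13}^2$ and hence $\calP'$ (rigidity is preserved by Kummer pullback and middle convolution), and one can cross-check the resulting $\delta^p$ and $h^p$ against the identity $\delta^{p-1}-\delta^p-h^{p-1}-h^p+\omega^{p-1}=0$ of \cite[Proposition~2.7]{DR19}.

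The main obstacle is the case analysis at $\infty$. Depending on whether $b=1/2$, $1/2<b<a$, or $b>a$, the input nearby cycle data from Proposition \ref{E1E3} differs in a way that propagates through both steps and yields the differently shaped tables in the statement. The sub-case $b=3/4$ is special: the shifted eigenvalues $\exp(-2\pi i (2b-\mu))$ and $\exp(-2\pi i (2(1-b)+1-\mu))$ at $\infty$ of $\calP'$ coincide modulo $1$, so the two nearby cycle spaces merge and the corresponding columns in the table collapse. Working out this bookkeeping in each case---tracking Jordan block sizes, primitive multiplicities, and distinguishing Jordan blocks of size $1$ from those of size $\geq 2$ produced by the middle convolution---is the technical heart of the argument.
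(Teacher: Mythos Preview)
Your overall strategy matches the paper's, but several concrete claims are wrong and would derail the computation.

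First, pullback along $[2]:t\mapsto t^2$ preserves rank; it does not double it. One has $h^p(\calP_{13}^2)=h^p(\calP_{13})=2$. Correspondingly, at the ramified point $\infty$ the monodromy is \emph{squared} (one loop upstairs covers two loops downstairs), so an eigenvalue $\exp(-2\pi i b)$ becomes the single eigenvalue $\exp(-2\pi i\cdot 2b)$ with the same multiplicity; it does not split into two. Your description is the one for pushforward, not pullback.

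Second, and more seriously, $\calP_{13}^2$ is \emph{not} parabolically rigid, and Kummer pullback does not preserve parabolic rigidity in general (it multiplies the number of finite singularities while keeping the rank fixed). The nonvanishing of $H^1_{\para}(\calP_{13}^2)$ is in fact the source of the column $\nu^p_{\infty,1-\mu,0}(\calP')$ in the tables: by \cite[Corollary~6.2]{DR19} together with \cite[Proposition~2.7]{DR19},
\[
\nu^p_{\infty,1-\mu,0}(\calP') \;=\; h^p\bigl(H^1_{\para}(\calP_{13}^2)\bigr) \;=\; \delta^{p-1}-\delta^p-h^{p-1}-h^p+\omega^{p-1}
\]
evaluated on $\calP_{13}^2$. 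So the expression you proposed to use as a vanishing cross-check is exactly the quantity you must compute, and it is nonzero (e.g.\ equal to $3$ for $p=1$ in several cases). The paper obtains $h^p(\calP')$ from \cite[Proposition~5.3(i)]{DR19} via $h^p(\calP')=\delta^{p-1}-\delta^p+\omega^{p-1}_{\neq\infty}$ on $\calP_{13}^2$, and the remaining nearby cycle columns from \cite[Proposition~5.3(ii)]{DR19}. Once you correct the pullback step and drop the false rigidity claim, the rest of your outline (including the $b=3/4$ collapse) is on target.
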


\begin{proof}
One has the following Hodge data for ${  \calP}_{13}^2$
 \begin{eqnarray*}
   \begin{array}{cccccc}
     p & h^p   & \delta^p   & \omega^p  & \omega^p_{\neq \infty}   & \nu^p_{\infty,0,1}   \\
      \hline
     0 & 2 & -2 & 7&5 & 0  \\
     1 & 2 & 0 & 2&2 & 2
    \end{array},&& \quad b=1/2, \\
    \begin{array}{ccccccc}
     p & h^p   & \delta^p  & \omega^p   & \omega^p_{\neq \infty}   & \nu^p_{\infty,2b-1,0}   &  \nu^p_{\infty,2(1-b),0}  \\
      \hline
     0 & 2 & -2 & 7&5 & 2 & 0 \\
     1 & 2 & -2 & 4& 2 & 0 & 2
    \end{array},&& \quad 1/2<b< a, b\neq 3/4 \\
     \begin{array}{cccccc}
     p & h^p   & \delta^p   & \omega^p  & \omega^p_{\neq \infty}   & \nu^p_{\infty,2b-1,0}   \\
      \hline
     0 & 2 & -2 & 7&5 & 2  \\
     1 & 2 & -2 & 4& 2 & 2 
    \end{array},&& \quad b< a, b=3/4 \\
     \begin{array}{ccccccc}
     p & h^p & \delta^p & \omega^p& \omega^p_{\neq \infty}   & \nu^p_{\infty,2b-1,0}   &  \nu^p_{\infty,2(1-b),0}  \\
      \hline
     0 & 2 & -3 & 7&5 & 1 & 1 \\
     1 & 2 & -1 & 4& 2 & 1 & 1
    \end{array},&& \quad b > a \\
     \begin{array}{ccccccc}
     p & h^p & \delta^p & \omega^p& \omega^p_{\neq \infty}   & \nu^p_{\infty,2b-1,0}     \\
      \hline
     0 & 2 & -3 & 7&5 & 2  \\
     1 & 2 & -1 & 4& 2 & 2 
    \end{array},&& \quad 3/4=b > a.
\end{eqnarray*}
 The claim on $h^p  , \delta^p, \omega^p, \omega^p_{\neq \infty}, \nu^p$ follows from the above propositions, \cite[Lemma~2]{SY19} and the local
 monodromy data of ${  \calP}_{13}^2$ .
 
 By \cite[Proposition~5.3 (i)]{DR19}
 \[ h^p({  \calP}')=\delta^{p-1}({  \calP}_{13}^2)-\delta^p({  \calP}_{13}^2)+\omega^{p-1}_{\neq \infty}({  \calP}_{13}^2).\]
 The claim on 
  $\nu^p_{\infty,1-\mu,0}$ follows from  \cite[Corollary~6.2]{DR19} and \cite[Proposition~2.7]{DR19}
  \[ \nu^p_{\infty,1-\mu,0}({  \calP}')=h^p(H^1_{\para}({  \calP}_{13}^2))=\]
 \[  \delta^{p-1}({  \calP}_{13}^2)-\delta^p({  \calP}_{13}^2)-h^{p-1}({  \calP}_{13}^2)-h^{p}({  \calP}_{13}^2)+\omega^{p-1}({  \calP}_{13}^2) \]
  and the claim on
  $ \nu^p_{\infty,2b-\mu,l}({  \calP}'), \nu^p_{\infty,2(1-b)+1-\mu,l}({  \calP}')$ from
  \cite[Proposition~5.3 (ii)]{DR19}.
\end{proof}

\begin{prop}\label{E2}
 There is a rigid regular singular connection ${\mathcal P}_2$ of rank $2$ with symplectic monodromy group and local monodromy
  \[ \begin{array}{ccccc}
     0 & 1& 4& \infty \\
     \hline
   \bJ(2) & (\alpha,\alpha^{-1}) & \bJ(2) & (1,1)
     \end{array}.
   \]
 Further, let ${\mathcal P}_2^2=[2]^*{\mathcal P}_2$.
 Then for the rank $7$ connection $\MC_{-1}{\mathcal P}_2^2$ we have the Hodge data
  \[ \begin{array}{cccccc}
     p & h^p &  \nu^p_{\infty,1/2,0} & \nu^p_{\infty,1/2,1} \\
     \hline
    0 & 2 & 1 & 0\\
    1& 3& 1& 1 \\
    2 & 2& 1&  1
     \end{array}
   \]
\end{prop}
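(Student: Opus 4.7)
The plan is to mirror the proofs of Propositions \ref{E1E3} and \ref{E1E3conv} in three steps: construct $\calP_2$ and determine its Hodge data, pull back by $[2]$ to obtain the Hodge data of $\calP_2^2$, and then apply the middle convolution $\MC_{-1}$.

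First I would construct $\calP_2$ via the Katz algorithm as a short sequence of middle convolutions and rank-one twists starting from a rank-one regular singular connection on $\PP^1\setminus \{0,1,4,\infty\}$. The tuple of local monodromies is linearly rigid (a direct rigidity-index computation with centralizer dimensions gives $2$), stable under inversion of eigenvalues so that the local system is self-dual, and contains transvections at $0$ and $4$, forcing the monodromy group into $\Sp_2 = \SL_2$. Since $\rk \calP_2 = 2$, \cite[Proposition~2.3.3]{DS13} bounds the Hodge length by $2$, and self-duality then forces (up to a global shift) $h^0(\calP_2) = h^1(\calP_2) = 1$. The local nearby-cycle data at $0,1,4$ is determined by the local monodromies, and at $\infty$ all nearby cycles sit at $a=0$ with trivial monodromy filtration.

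Next I would compute the Hodge data of $\calP_2^2 = [2]^* \calP_2$. At the preimages $\pm 1, \pm 2$ the local invariants are unchanged; at $0$ the local monodromy remains $\bJ(2)$, and at $\infty$ it remains trivial. The global Hodge numbers $\delta^p(\calP_2^2)$ come from Lemma \ref{lem: global hodge}, and the local Hodge data at $0$ and $\infty$ after the Kummer pullback are transported by \cite[Lemma~2]{SY19}.

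Finally I would apply $\MC_{-1}$ to $\calP_2^2$. The convolution parameter $-1 = \exp(-2\pi i \cdot 1/2)$ gives $\mu = 1/2$, so the nearby cycles at $\infty$ produced by the convolution are concentrated at $1-\mu = 1/2$. The Hodge numbers $h^p(\MC_{-1}\calP_2^2)$ follow from \cite[Proposition~5.3(i)]{DR19}, the quantities $\nu^p_{\infty,1/2,0}$ from \cite[Corollary~6.2]{DR19} combined with \cite[Proposition~2.7]{DR19}, and the primitive part $\nu^p_{\infty,1/2,1}$ (counting size-two Jordan blocks at eigenvalue $-1$) from \cite[Proposition~5.3(ii)]{DR19}. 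The main delicate step, parallel to the $b=1/2$ case of Proposition \ref{E1E3conv}, is determining $\nu^p_{\infty,1/2,1}$: because the original monodromy of $\calP_2^2$ at $\infty$ is the identity, the convolution enlarges Jordan blocks at $\infty$ into size-two blocks at eigenvalue $-1$, and the primitive graded pieces of the monodromy filtration must be tracked carefully to separate their Hodge contributions from those of the size-one blocks.
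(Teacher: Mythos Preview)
Your plan is correct and in fact hews more closely to the template of Proposition~\ref{E1E3conv} than the paper's own proof does. The paper takes a somewhat different route for this case.

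For the Hodge numbers $h^p(\MC_{-1}\calP_2^2)$, instead of invoking \cite[Proposition~5.3(i)]{DR19} on $\calP_2^2$ directly, the paper introduces an auxiliary rank-one connection $\calM$ with monodromy $-1$ at $\infty$ and at a generic finite point $t$, computes the Hodge data of $\calP_2^2\otimes\calM$ via \cite[Proposition~2.3.2]{DS13}, and then uses that the Hodge numbers of the fibre of $\MC_{-1}\calP_2^2$ coincide with those of $H^1_{\para}(\calP_2^2\otimes\calM)$, which are read off from \cite[Theorem~4.3]{DR19}. For the nearby-cycle data at $\infty$, rather than applying \cite[Proposition~5.3(ii)]{DR19} and \cite[Corollary~6.2]{DR19} as you propose, the paper argues structurally: by \cite[Corollary~5.10]{DR00} the monodromy of $\MC_{-1}\calP_2^2$ is orthogonal, and the Katz algorithm gives local monodromy $(-\bJ(2),-\bJ(2),-1,-1,-1)$ at $\infty$; orthogonality together with $h^p=2,3,2$ then pins down $\nu^1_{\infty,1/2,1}=\nu^2_{\infty,1/2,1}=1$, and $\nu^p_{\infty,1/2,0}$ follows by subtraction since $h^p=\nu^p_{\infty,1/2}$.

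Your approach has the virtue of uniformity with Proposition~\ref{E1E3conv} and avoids the extra input of the orthogonality result from \cite{DR00}; the paper's argument is more ad~hoc here but makes the self-duality constraints do the work.
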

  
\begin{proof}  
  The connection $\calP_2$ may be constructed using the Katz algorithm \cite[Chapter 6]{Ka96}. Since the monodromy tuple of ${  {\mathcal P}_2}$ is linearly rigid and the eigenvalues of its local monodromies are invariant under taking
 inverses the monodromy group is selfdual.
 The transvection at $0$ implies that the monodromy group is contained in $\Sp_2$ and the Hodge length is two.
  The local monodromy gives $\omega^0=3, \omega^1=1$ and $\nu^0_{\infty,0,0}=\nu^1_{\infty,0,0}=1$.
  From the parabolic rigidity of ${\mathcal P}_2$ we infer from \cite[Proposition~2.3.3]{DS13}
 \[ 0=-\delta^0-h^0, \quad 0=\delta^0-\delta^1-h^0-h^1-\omega^0.\]
  Hence we have the following Hodge data
 \[ \begin{array}{ccccc}
     p & h^p & \delta^p& \omega^p& \nu^p_{\infty,0,0} \\
     \hline
    0 & 1 & -1  &3 &  1 \\
    1 & 1 &  0   &1  & 1
     \end{array}
   \]
  The global Hodge data of ${\mathcal P}_2^2$ can be determined from Proposition~\ref{lem: global hodge}  which yields
  \[ \begin{array}{ccccc}
     p & h^p & \delta^p& \nu^p_{\infty,0,0} \\
     \hline
    0 & 1 & -2 & 1 \\
    1& 1& 0& 1
     \end{array}
   \]
  
  The  Hodge data of  ${\mathcal P}_2^2 \otimes \calM,$ 
  where $\calM$  is the regular singular connection of rank $1$ with local monodromy $-1$ both at $\infty$ and at $t$ for any $t \in \PP^1\setminus\{0,\pm 1,\pm 2,\infty\}$
  are
  \[ \begin{array}{ccccc}
     p & h^p & \delta^p &\omega^p \\
     \hline
    0 & 1 & -3 & 7\\
    1& 1& -1& 4
     \end{array}
   \]
   by  \cite[Proposition~2.3.2]{DS13}.
  Since the  Hodge numbers of the parabolic cohomology of ${\mathcal P}_2^2 \otimes \calM$ are the  Hodge numbers of $\MC_{-1} {\mathcal P}_2^2$,
  we get by \cite[Theorem~4.3]{DR19}
  \[h^p(\MC_{-1}{\mathcal P}_2^2)=\delta^{p-1}-\delta^p-h^{p-1}-h^p+\omega^{p-1}.\]
  Therefore the Hodge numbers $h^p, p=0,1,2,$ of $\MC_{-1}{\mathcal P}_2^2$ are $2,3,2$.
%   \[ \begin{array}{ccccc}
%      p & h^p(\MC_{-1}\calL_2) \\
%      \hline
%     0 & 2 \\
%     1& 3&  \\
%     2& 2
%      \end{array}
%    \]

   Since the monodromy of $\MC_{-1}{\mathcal P}_2^2$ is orthogonal by \cite[Corollary~5.10]{DR00} and the local monodromy at infinity is $(-\bJ(2),-\bJ(2),-1,-1,-1)$
   by the Katz algorithm \cite[Chapter~6]{Ka96}
   we get $\nu^1_{\infty,1/2,1}=\nu^2_{\infty,1/2,1}=1$.
   Having $h^p=\nu^p_{\infty,1/2}$   one concludes  $\nu^p_{\infty,1/2,0}=1$.
 
\end{proof}

\begin{prop}\label{E4}
 There is a rigid regular singular connection ${\mathcal P}_4$ of rank $4$ with symplectic monodromy group and local monodromy
  \[ \begin{array}{ccccc}
     0 & 1& t^2 &(t+1)^2 & \infty \\
     \hline
   (\bJ(2),1,1) &  (\bJ(2),1,1)&  (\bJ(2),1,1) &  (\bJ(2),1,1) & (-\bJ(2),-1,-1)
     \end{array}
   \]
 Further, let ${\mathcal P}_4^2=[2]^*{\mathcal P}_4$.
 Then for the rank $7$ connection $\MC_{-1}{\mathcal P}_4^2$ we have the Hodge data
  \[ \begin{array}{cccccc}
     p & h^p &  \nu^p_{\infty,1/2,1} & \nu^p_{\infty,1/2,2} \\
     \hline
    0 & 2 & 0 & 0\\
    1& 3& 1& 0 \\
    2 & 2& 1&  1
     \end{array}
   \]
\end{prop}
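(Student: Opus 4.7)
The plan is to mimic, step for step, the three-stage proof of Proposition \ref{E2}, but with the rank-$2$ building block $\calP_2$ replaced by the rank-$4$ connection $\calP_4$ and with the local data adjusted to account for the extra two finite singularities and for the different Jordan structure at $\infty$.

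First, I would construct $\calP_4$ via Katz's algorithm \cite[Chapter~6]{Ka96}, starting from a regular singular rank-one connection on $\PP^1\setminus\{0,1,t^2,(t+1)^2,\infty\}$ and composing a sequence of middle convolutions with rank-one twists that realises the prescribed conjugacy classes. The tuple is linearly rigid and invariant under inversion of eigenvalues, so the monodromy representation is self-dual; the fact that the local monodromy at each of the four finite singularities is a transvection then forces the image into $\Sp_4$ and makes the Hodge length equal to $2$. From the local monodromies I can read off $\omega^0$, $\omega^1$ and $\nu^p_{\infty,1/2,1}$; combined with the two parabolic-rigidity equations of \cite[Proposition~2.7]{DR19}
\[
0=-\delta^0-h^0,\qquad 0=\delta^0-\delta^1-h^0-h^1-\omega^0,
\]
and self-duality, these determine the global numbers $h^p$ and $\delta^p$ of $\calP_4$.

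Second, I would pass to $\calP_4^2=[2]^*\calP_4$. Since every eigenvalue of the local monodromy at $\infty$ is $-1$, the pulled-back monodromy at $\infty$ is unipotent with Jordan structure $(\bJ(2),1,1)$; Lemma~\ref{lem: global hodge} gives the shifted global degrees $\delta^p(\calP_4^2)$, and the other local invariants are read off directly. To arrange that $-1$ occurs as a local eigenvalue and hence that $\MC_{-1}$ is applicable, I would then twist by the rank-one connection $\calM$ with monodromy $-1$ at $\infty$ and at one auxiliary base point; by \cite[Proposition~2.3.2]{DS13} this shifts $\delta^p$ and $\omega^p$ in a controlled way and produces the local monodromy $(-\bJ(2),-1,-1)$ at $\infty$. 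Applying $\MC_{-1}$ and using the formula
\[
h^p(\MC_{-1}\calP_4^2)=\delta^{p-1}-\delta^p-h^{p-1}-h^p+\omega^{p-1}
\]
from \cite[Theorem~4.3]{DR19} gives the claimed Hodge numbers $(h^0,h^1,h^2)=(2,3,2)$ of the rank-$7$ output.

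Finally, for the nearby-cycle data at $\infty$ I would invoke \cite[Corollary~5.10]{DR00}: the monodromy of $\MC_{-1}\calP_4^2$ is orthogonal, so its Hodge filtration is symmetric about the middle index. The Katz algorithm \cite[Chapter~6]{Ka96} applied step by step to the local monodromy at $\infty$ yields a single size-$3$ Jordan block and a single size-$2$ Jordan block at eigenvalue $-1$, contributing respectively to $\nu^p_{\infty,1/2,2}$ and $\nu^p_{\infty,1/2,1}$; combined with orthogonality and with the already-computed $h^p$, this pins down the placement of the jumps and forces the claimed values $\nu^p_{\infty,1/2,1}=(0,1,1)$ and $\nu^p_{\infty,1/2,2}=(0,0,1)$ for $p=0,1,2$. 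The main technical obstacle I expect is bookkeeping the Jordan structure at infinity through the whole chain of operations (pullback, rank-one twist, middle convolution) and verifying that orthogonality together with the global $h^p$ uniquely determines the Hodge grading of the size-$3$ block; the other parts are direct applications of the machinery already cited.
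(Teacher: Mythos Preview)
Your overall plan---compute $h^p,\delta^p$ for $\calP_4$, pull back via Lemma~\ref{lem: global hodge}, twist by an auxiliary rank-one $\calM$, and feed into \cite[Theorem~4.3]{DR19}---would work, but the paper takes a much shorter route that bypasses the global degrees $\delta^p$ entirely. It first notes that $\calP_4=\MC_{-1}\calM$ for a \emph{single} rank-one $\calM$ with monodromy $-1$ at the four finite points (so your ``sequence of middle convolutions with rank-one twists'' is in fact one step); hence $\MC_{-1}\calP_4^2$ is reached from rank one by very few convolutions and a pullback, and \cite[Proposition~2.3.3]{DS13} already bounds its Hodge length by $3$. The Katz algorithm gives the local monodromy at $\infty$ as $(-\bJ(2),-\bJ(3),-\bJ(2))$; the size-$3$ block forces the Hodge length to be exactly $3$ and places $\nu^2_{\infty,1/2,2}=1$. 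Orthogonality (via \cite[Corollary~5.10]{DR00}) then gives $h^p=(a,b,a)$ with $2a+b=7$, and the possible placements of the two $\bJ(2)$ primitives against this symmetry leave only $h^p=(2,3,2)$. No $\delta^p$, no Lemma~\ref{lem: global hodge}, no auxiliary twist is needed. Your approach buys nothing extra here and costs several intermediate computations.

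There is also a concrete slip in your final paragraph: you claim ``a single size-$3$ Jordan block and a single size-$2$ Jordan block'' at $\infty$, but the correct structure is $(-\bJ(2),-\bJ(3),-\bJ(2))$, i.e.\ \emph{two} size-$2$ blocks together with one size-$3$ block (otherwise the rank is $5$, not $7$, and $\sum_p\nu^p_{\infty,1/2,1}$ cannot equal $2$). With this corrected, the endgame does work as you indicate: if both $\bJ(2)$ primitives sat at the same Hodge degree the resulting $h^p$ would violate $h^0=h^2$, so one primitive is at $p=1$ and one at $p=2$, giving $\nu^p_{\infty,1/2,1}=(0,1,1)$ and $h^p=(2,3,2)$.
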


\begin{proof}  
  Since 
  ${\mathcal P}_4=\MC_{-1} {\mathcal M},$
  where ${\mathcal M}$ is the regular singular connection with local monodromy 
\[
\begin{array}{c c c c c c} 
	& 0 & 1 & t^2 & (t+1)^2 &  \infty \\  
		\hline 
		 & -1 & -1 &-1 & -1 & 1
		
\end{array}
\]
  the monodromy tuple of ${  {\mathcal P}_4}$ is linearly rigid and the monodromy group is contained in $\Sp_4$ by \cite[Corollary~5.10]{DR00}. 
   Thus the monodromy group of $\MC_{-1}{\mathcal P}_4^2$ is orthogonal by \cite[Corollary~5.10]{DR00}.
    From  \cite[Proposition~2.3.3]{DS13} one concludes that the Hodge length of $  \MC_{-1}{\mathcal P}_4^2$ is at most the number of used middle convolutions which is $3$.
   The local monodromy at infinity is $(-\bJ(2),-\bJ(3),-\bJ(2))$ by \cite[Chapter 6]{Ka96}.
   This implies that the Hodge length is $3$ and 
    $\nu^2_{\infty,1/2,2}=1$.
   By selfduality the Hodge numbers $h^p, p=0,1,2,$ are $a,b,a$ with $2a+b=7$. %, where $a\geq 2$ and $b\geq 2$.
   Thus the Hodge numbers are $2,3,2$ and $\nu^1_{\infty,1/2,1}=\nu^2_{\infty,1/2,1}=1$.
 
\end{proof}

\section{Operators and Fourier transform} \label{s: operators} 
In the following we construct explicit differential operators corresponding to the connections we consider. Using these operators we can relate pullbacks of the irregular rigid $G_2$-connections to pullbacks of rigid local systems. \par
We use the following notation. Let $\delta$ denote the operator on $\CC[x]$ that sends a polynomial $f$ to its derivative $\frac{d}{dx}f$. Denote by $D=\CC[x]\langle \delta \rangle$ the ring of differential operators and let $\vartheta=x\delta$. \par
Consider the Fourier transform
\[FT:\CC[x]\langle \delta \rangle \rightarrow \CC[x]\langle \delta \rangle\]
defined by $FT(x)=\delta$ and $FT(\delta)=-x$. For any holonomic $D$-module its Fourier transform is the pullback along the map $FT$. In particular for any $P\in \CC[x]\langle \delta \rangle$ the module $D/DP$ is holonomic and its Fourier transform is the holonomic module $D/DFT(P)$.

Any differential operator $P\in \CC[x]\langle \delta \rangle$ has a finite singular locus $S\subset \PP^1$. Assume that $S=S'\cup \{0,\infty\}$. We may consider $P$ as an element of the localization $\CC[x, x^{-1}, (x-s)^{-1}, s\in S']\langle \vartheta \rangle$. As such it determines a linear homogeneous differential equation on $\PP^1\setminus S$. This in turn determines a connection on the trivial vector bundle on $\PP^1\setminus S$. Given a connection $\calE$ on $\PP^1 \setminus S$ we say that $P$ is the operator for $\calE$ if the connection on $\PP^1 \setminus S$ determined by $P$ is isomorphic to the connection $\calE$.

\begin{prop}\label{OpEi}
 The operator
 \begin{eqnarray*} L &:= &L_0(\vartheta)+xL_1(\vartheta)+x^2L_2(\vartheta)+x^3L_3(\vartheta)\in \CC[x][\vartheta],
 \end{eqnarray*}
 where
%L= [4*X*(X-1)*(X+1)*(2*X+1-2*b)*(-2*b-1+2*X)*(2*b+1+2*X)*(2*b-1+2*X), -12*X*(X+1)*(2*X+1)*(2*b+1+2*X)*(2*X+1-2*b), (X+1)*(36*X^2+12*a^2-16*b^2+72*X-12*a+43), -6-4*X]
%aus G2_irr_starr2.maple
 \begin{eqnarray*}
   L_0(\vartheta)&=& 4    \vartheta  (  \vartheta-1)  (  \vartheta+1)  (2    \vartheta+1-2  b)  (-2  b-1+2    \vartheta)  (2  b+1+2    \vartheta)  (2  b-1+2    \vartheta) \\
   L_1(\vartheta)&=& -12    \vartheta  (  \vartheta+1)  (2    \vartheta+1)  (2  b+1+2    \vartheta)  (2    \vartheta+1-2  b)  \\
   L_2(\vartheta)&=&(  \vartheta+1)  (36    \vartheta^2+12  a^2-16  b^2+72    \vartheta-12  a+43)  \\
   L_3(\vartheta)&=& -6-4    \vartheta 
 \end{eqnarray*}
 is the operator for
 \begin{enumerate}
  \item ${\mathcal E}_1$ if $b\in 1/2+\ZZ$. 
  \item ${\mathcal E}_2$  if  $b\in \ZZ$.
  \item  ${\mathcal E}_3$ if $b\in \RR \setminus \{\ZZ \cup 1/2+\ZZ\}$.
 \end{enumerate}

%  Further
%  \begin{eqnarray*}
%   [2]^*L&=&L_0(\vartheta/2)+x^2L_1(\vartheta/2)+x^4L_2(\vartheta/2)+x^6L_3(\vartheta/2).
%  \end{eqnarray*}
\end{prop}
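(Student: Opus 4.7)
The plan is to build the operator $L$ by translating the Katz--Arinkin construction of $\calE_i$ given in \cite{Ja20} into the language of differential operators, and then to verify that the trichotomy in $b$ corresponds to the three cases of the proposition.

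First, I would recall from \cite[Theorem~1.1]{Ja20} the explicit sequence of operations producing $\calE_i$: it is obtained from a rank-one regular singular connection on $\Gm$ by iteratively applying middle convolutions, tensor products with rank-one regular singular modules, and one or more Fourier transforms. On the level of operators these become respectively: the twist $P(x,\vartheta)\mapsto P(x,\vartheta-c)$ (with $c$ the appropriate exponent shift coming from the rank-one factor $x^{c}$), the formula $\MC_\chi=\mathrm{FT}^{-1}\circ(\text{twist by }x^{\mu})\circ\mathrm{FT}$ for $\chi=\exp(-2\pi i\mu)$, and the Fourier involution $P(x,\delta)\mapsto P(-\delta,x)$ itself. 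Starting from the initial rank-one operator selected in \cite{Ja20}, a finite and explicit number of such steps produces, after normal form reduction modulo the left ideal it generates in $D$, an operator of the shape $L_0(\vartheta)+xL_1(\vartheta)+x^2L_2(\vartheta)+x^3L_3(\vartheta)$ which should coincide with the stated $L$.

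Next, to distinguish the three cases, I would examine the indicial polynomial at $x=0$, namely $L_0(\vartheta)$, whose roots are $\{0,\pm1,\pm(b-1/2),\pm(b+1/2)\}$. Exponentiating gives the monodromy eigenvalues at $0$: for $b\in 1/2+\ZZ$ all of them reduce to $1$ modulo $\ZZ$, and the resonance structure dictated by the lower-order terms $xL_1(\vartheta)$ forces the Jordan type $(\bJ(3),\bJ(3),1)$ of $\calE_1$; for $b\in\ZZ$ one obtains three eigenvalues $+1$ and four eigenvalues $-1$, producing $(-\bJ(2),-\bJ(2),E_3)$ of $\calE_2$; for $b\in\RR\setminus(\ZZ\cup(1/2+\ZZ))$ the exponents are non-resonant and realise the semisimple monodromy $(-\beta E_2,-\beta^{-1}E_2,E_3)$ of $\calE_3$. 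At $x=\infty$ the Newton polygon of $L$, whose vertices are recorded by the $\vartheta$-degrees $(7,5,2,1)$ of $L_0,L_1,L_2,L_3$, exhibits slopes $1$, $1/2$, $0$ and yields the decomposition $\El(2,1,(\alpha,\alpha^{-1}))\oplus\El(2,2,1)\oplus(-1)$. The parameter $\alpha=\exp(-2\pi i a)$ is recovered from the quadratic factor $36\vartheta^2+12a^2-16b^2+72\vartheta-12a+43$ of $L_2(\vartheta)$, whose roots encode the formal exponents of the slope-$1$ elementary module; this analysis is the same in all three cases.

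The main obstacle will be the middle convolution step: on operators it requires explicit Fourier transform bookkeeping, iterated twisting, and reduction to the canonical form above modulo the relations in $D$. Tracking the parameters $a$ and $b$ through these operations is delicate, and it is where the precise coefficients in $L_2(\vartheta)$ (in particular the dependence $12a^2-12a$) and $L_3(\vartheta)$ get pinned down. A complementary sanity check is to confirm that $D/DL$ is irreducible and rigid and that its monodromy and formal data lie in $G_2$; by the rigidity part of \cite[Theorem~1.1]{Ja20}, matching the local data at $0$ and $\infty$ then identifies the resulting $D$-module with $\calE_i$ up to isomorphism.
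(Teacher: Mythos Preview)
Your approach is essentially the paper's: translate the Katz--Arinkin construction of $\calE_i$ from \cite[Theorem~1.1]{Ja20} into operators, using the standard dictionary (the paper cites \cite[Chapter~4]{BR2012} and records the identities $FT(\vartheta)=-1-\vartheta$, $FT(x)=\delta$, $x^i\delta^i=(\vartheta-i+1)\cdots\vartheta$, $(\vartheta-1)x=x\vartheta$, and the inversion formula $x^{\deg_x L}[1/x]^*L=\sum x^{\deg_x L-i}L_i(-\vartheta)$). Your extra verification via the indicial polynomial at $0$ and the Newton polygon at $\infty$ is a reasonable sanity check that the paper omits.

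Two small slips in that verification, though. First, $\deg_\vartheta L_2=3$, not $2$: $L_2(\vartheta)=(\vartheta+1)(36\vartheta^2+\cdots)$ is cubic. Second, with the correct degrees $(7,5,3,1)$ the Newton polygon at $\infty$ has a single nonzero slope $1/2$ with multiplicity six and slope $0$ with multiplicity one; there is no slope-$1$ piece. This matches the actual formal type $\El(2,1,(\alpha,\alpha^{-1}))\oplus\El(2,2,1)\oplus(-1)$, in which all irregular summands have slope $1/2$. These corrections do not affect your overall argument.
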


\begin{proof}
 The construction of $L$ follows from the construction of the ${\mathcal E}_i, i=1,2,3, $ in \cite[Theorem~1.1]{Ja20} and
 the translation to operators, cf. \cite[Chapter~4]{BR2012}, where
 one also uses
  \[ FT(\vartheta)=(-1-\vartheta),\quad FT(x)=\delta,\quad  x^i\delta^i=(\vartheta-i+1) \cdots \vartheta,\quad (\vartheta-1) x=x \vartheta\]
  and 
  \[ x^{\deg_x L} [1/x]^*L=\sum_{i=0}^{\deg_x L} x^{\deg_x L-i} L_i(-\vartheta). \] 
 \end{proof}

\begin{prop}\label{OpE4}
 The operator
 \begin{eqnarray*}
L&=&  L_0(\vartheta)+xL_1(\vartheta)+x^2L_2(\vartheta)+x^3L_3(\vartheta) \in \CC[x][\vartheta],
\end{eqnarray*}
where
\begin{eqnarray*}
   L_0(\vartheta)&=& 128 \vartheta^2 (\vartheta-2)^2 (\vartheta-1)^3\\
    L_1(\vartheta)&=& -32 \vartheta^2 (t^2+t+1) (2 \vartheta-1) (\vartheta-1)^2\\
     L_2(\vartheta)&=& 8 (t^2+t+1)^2 \vartheta^3\\
      L_3(\vartheta)&=& -t^2 (t+1)^2 (2 \vartheta+1)
   \end{eqnarray*}
is the operator for ${\mathcal E}_4$.
\end{prop}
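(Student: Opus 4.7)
The plan is to mirror the proof of Proposition \ref{OpEi}: extract the explicit Katz--Arinkin construction of $\cE_4$ from the proof of \cite[Theorem~1.1]{Ja20}, and then translate each step into an operation on cyclic $D$-modules $D/DP$ for explicit operators $P \in \CC[x]\langle \delta \rangle$. I would start from the rank-one operator $\vartheta - f(x)$ encoding the initial local data, and apply in sequence the operator-theoretic versions of middle convolution, multiplicative twist by a rank-one connection, and Fourier transform.

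The dictionary is exactly the one recalled in the proof of Proposition \ref{OpEi}: $FT(\vartheta) = -1 - \vartheta$, $FT(x) = \delta$, $x^i \delta^i = \vartheta(\vartheta-1)\cdots(\vartheta-i+1)$, and $(\vartheta - 1)x = x\vartheta$, together with the pullback formula $x^{\deg_x L}[1/x]^*L = \sum_i x^{\deg_x L - i} L_i(-\vartheta)$. Middle convolution by $\lambda$ amounts to a sandwich by $x^\lambda$ combined with Fourier transform, while a twist by a rank-one regular singular connection shifts the constant-in-$x$ piece by $\vartheta \mapsto \vartheta + c$. Performing these symbolic manipulations in the order dictated by \cite[Theorem~1.1]{Ja20} and collecting the result in descending powers of $x$ should produce precisely the operator $L_0(\vartheta) + xL_1(\vartheta) + x^2L_2(\vartheta) + x^3L_3(\vartheta)$ displayed in the statement.

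As a final sanity check I would read the local data off $L$ directly: the indicial polynomial $L_0(\vartheta) = 128\vartheta^2(\vartheta - 2)^2(\vartheta - 1)^3$ at $x = 0$ has only integer roots, compatible with the unipotent Jordan structure $(\bJ(3), \bJ(2), \bJ(2))$; the Newton polygon at $x = \infty$ displays slope $1/2$ of total multiplicity $6$ together with a slope-zero piece, matching $\El(2,1,1) \oplus \El(2,t,1) \oplus \El(2,t+1,1) \oplus (-1)$; and the factor $t^2(t+1)^2$ appearing in $L_3(\vartheta)$ reflects the ramification locus $\{1, t, t+1\}$ at infinity dictated by those three elementary modules.

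The principal obstacle is bookkeeping. Unlike the case of $\cE_1, \cE_2, \cE_3$, the presence of the parameter $t$ and three distinct irregular summands at infinity means that the normalisations in each intermediate middle convolution, twist, and Fourier transform must be tracked with care; any rescaling choice at an earlier step propagates into a different value of $t$ at which the ramification sites of the elementary modules land. A clean numerical check is that, with the correct normalisations, the $t$-dependence of the $L_i$ comes out as the symmetric expressions $(t^2 + t + 1)$ in $L_1, L_2$ and $t^2(t+1)^2$ in $L_3$, exactly as stated.
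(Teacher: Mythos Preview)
Your approach is exactly the one in the paper: the proof there simply says that the construction of $L$ follows from the construction of $\cE_4$ in \cite[Theorem~1.1]{Ja20} together with the translation to operators as in Proposition~\ref{OpEi}. Your additional sanity checks on the indicial polynomial, the Newton polygon, and the $t$-dependence are helpful elaborations, but the underlying method is identical.
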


\begin{proof}
 The construction of $L$ follows from the construction of ${\mathcal E}_4$ in \cite[Theorem~1.1]{Ja20} and
 the translation to operators
 as in Proposition~\ref{OpEi}.
\end{proof}

\begin{prop}\label{P:b<>0}
 Let 
%P:=CY2diffop([-4096*X^2*(X-1)*(X+1), 1024*X*(X+1)*(9*X^2+3*a^2-4*b^2+9*X-3*a+4), -6144*(X+1)^2*(X+1+b)*(X+1-b), 1024*(X+1+b)*(X+1-b)*(X+2+b)*(X+2-b)],[dx,x]);
 \begin{eqnarray*}
   P &:= &P_0(\vartheta)+xP_1(\vartheta)+x^2P_2(\vartheta)+x^3P_3(\vartheta)\in \CC[x][\vartheta],  \quad b\not \in \ZZ,\\
 % \end{eqnarray*}
%\begin{eqnarray*}
   P_0(\vartheta)&=& -4096  \vartheta^2  (\vartheta-1)  (\vartheta+1) \\
   P_1(\vartheta)&=& 1024  \vartheta  (\vartheta+1)  (9  \vartheta^2+3  a^2-4  b^2+9  \vartheta-3  a+4)  \\
   P_2(\vartheta)&=& -6144  (\vartheta+1)^2  (\vartheta+1+b)  (\vartheta+1-b) \\
    P_3(\vartheta)&=&1024  (\vartheta+1+b)  (\vartheta+1-b)  (\vartheta+2+b)  (\vartheta+2-b),
 \end{eqnarray*}
 be the symplectic operator of degree $4$ with Riemann scheme
 \[ \begin{array}{ccccc}
     0 & 1& 4& \infty \\
     \hline
     -1 & 0 & 0& 2-b \\
     0 & 1 & 1&  1-b \\
     0 & a& 1&  1+b \\
     1 & 1-a &2 & 2+b
    \end{array}
\]
Then $P$ is the operator for $\calP_{13}$ in Proposition~\ref{E1E3}.
\end{prop}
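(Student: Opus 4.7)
The plan is to verify that $P$ has the Riemann scheme claimed, that the resulting local monodromies match those of $\calP_{13}$, and to conclude via rigidity.

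First I would extract the indicial polynomial of $P$ at each of the four singular points. At $0$ the lowest-order-in-$x$ term $P_0(\vartheta) = -4096\,\vartheta^{2}(\vartheta-1)(\vartheta+1)$ is already the indicial polynomial, with roots $\{-1,0,0,1\}$. At $\infty$, using the change of variable $y=1/x$ and $\vartheta \mapsto -\vartheta'$, the highest $x$-degree coefficient $P_3(-1-\vartheta')$ plays this role, producing roots $\{1\pm b,\,2\pm b\}$. At the finite points $c\in\{1,4\}$ I would change variable to $y=x-c$, rewrite $\vartheta$ in terms of the Euler operator $y\partial_y$, and read off the lowest-$y$ coefficient to confirm the exponents $\{0,1,a,1-a\}$ at $1$ and $\{0,1,1,2\}$ at $4$. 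It remains to verify that no further singularities occur, which is immediate from the displayed $P_i$, the principal symbol of $P$ having $x$-discriminant supported only in $\{0,1,4\}$.

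Second, I need to identify the Jordan type of the local monodromies. At $1$ and $\infty$ the exponents are pairwise non-congruent modulo $\ZZ$ under the genericity hypotheses of Proposition~\ref{E1E3}, so the local monodromies are automatically semisimple with the claimed eigenvalues $(\alpha,\alpha^{-1},1,1)$ and $(\beta,\beta,\beta^{-1},\beta^{-1})$. At $0$ and $4$ all exponents are integers, so I must demonstrate that the unit-eigenvalue monodromy has Jordan type $(\bJ(2),1,1)$; the Frobenius method produces the corresponding solution space, and I would show that precisely one logarithmic solution appears (and no $\log^2$), forcing a single Jordan block of size $2$ and two trivial blocks. Symplecticity of the monodromy then follows from the self-duality of this local data together with the presence of the single transvection at $0$, exactly as in the proof of Proposition~\ref{E1E3}.

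Finally, since $\calP_{13}$ is linearly rigid, any irreducible connection on $\PP^1 \setminus \{0,1,4,\infty\}$ with these local invariants is isomorphic to $\calP_{13}$, so the connection defined by $P$ is isomorphic to $\calP_{13}$; irreducibility is itself forced by the rigidity index, since a reducible connection with this Riemann scheme would have non-rigid summands violating the numerical constraint. The main obstacle is the Jordan-block analysis at $0$ and $4$, where exponents are congruent modulo $\ZZ$; a cleaner alternative is to construct $P$ directly by translating the Katz algorithm used in Proposition~\ref{E1E3} into the language of differential operators via the dictionary of \cite[Chapter~4]{BR2012} (middle convolution becomes a composition of Fourier transform, twist by $x^{-\lambda}$, and inverse Fourier transform; tensor by a rank-one connection becomes the shift $\vartheta \mapsto \vartheta + c$), and then to check that the resulting operator agrees coefficient-wise with the displayed $P$.
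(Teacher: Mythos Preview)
Your ``cleaner alternative'' at the end---translating the Katz-algorithm construction of $\calP_{13}$ step by step into operators via \cite[Chapter~4]{BR2012}---is exactly the paper's proof, which consists of a single sentence pointing to that dictionary.

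Your primary approach (verify the Riemann scheme, pin down Jordan types, invoke rigidity) is a legitimate strategy, but the execution contains an error. You assert that at $1$ and at $\infty$ the exponents are pairwise non-congruent modulo $\ZZ$, so that semisimplicity is automatic. This is false at both points. At $x=1$ the exponents are $\{0,1,a,1-a\}$, and $0\equiv 1 \pmod{\ZZ}$; at $\infty$ the exponents are $\{1\pm b,\,2\pm b\}$, and $1+b\equiv 2+b$, $1-b\equiv 2-b$. So the Frobenius analysis for logarithmic solutions is required at \emph{every} singularity, not just at $0$ and $4$. Moreover the outcome genuinely depends on the parameters: in the boundary case $\beta=-1$ of Proposition~\ref{E1E3} the monodromy at $\infty$ is $(-\bJ(2),-\bJ(2))$, not semisimple, and in the case $\alpha=-1$ the monodromy at $1$ is $(-\bJ(2),1,1)$. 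Hence your claimed shortcut does not hold, and the ``main obstacle'' you flag is present at all four points. Once you accept that, the direct operator translation you mention is both shorter and what the paper actually does.
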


% 
%  Rsymb(P);
%                                                        0, [-1, 0, 0, 1]
% 
%                                                      1, [0, 1, a, -a + 1]
% 
%                                                         4, [0, 1, 1, 2]
% 
%                                            infinity, [-b + 1, -b + 2, b + 1, b + 2]

\begin{proof} 
The construction of $P$ follows from the construction of ${\mathcal P}_{13}$ in Proposition~\ref{E1E3} and
 the translation to operators, cf. \cite[Chapter~4]{BR2012}.
 Hence the claim follows.
\end{proof}

\begin{prop}\label{P:b=0}
  Let
  \begin{eqnarray*}
   P &:= &P_0(\vartheta)+xP_1(\vartheta)+x^2P_2(\vartheta)+x^3P_3(\vartheta) \in \CC[x][\vartheta],\quad b\in \ZZ
  \end{eqnarray*}
\begin{eqnarray*}
   P_0(\vartheta)&=&-4  \vartheta^2\\  
   P_1(\vartheta)&=& 9  \vartheta^2+3  a^2+9  \vartheta-3  a+4 \\
   P_2(\vartheta)&=& -6  (\vartheta+1)^2\\
   P_3(\vartheta)&=& (\vartheta+2)  (\vartheta+1)
 \end{eqnarray*}
 with Riemann scheme
 \[ \begin{array}{ccccc}
     0 & 1& 4& \infty \\
     \hline
     0 & -a& 0& 1\\
     0 & a-1& 0 &2
     \end{array}
\]

Then $P$  is the operator for $\calP_2$ in Proposition~\ref{E2}.
\end{prop}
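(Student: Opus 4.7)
The approach is to follow verbatim the strategy used in the proof of Proposition~\ref{P:b<>0}: reconstruct $\calP_2$ from the proof of Proposition~\ref{E2} as an iterated middle convolution (possibly interleaved with twists by a rank-one system) applied to a rank-one regular singular connection on $\PP^1 \setminus \{0,1,4,\infty\}$, and translate every step into the operator-theoretic language using the dictionary of \cite[Chapter~4]{BR2012}. The key identities are $FT(\vartheta) = -1-\vartheta$, $FT(x) = \delta$, $x^i \delta^i = (\vartheta - i + 1) \cdots \vartheta$, and $(\vartheta - 1)x = x\vartheta$; middle convolution becomes Fourier transform composed with a twist, and tensoring with a rank-one system corresponds to left-multiplication by an appropriate rational prefactor together with an integer shift of $\vartheta$.

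Because $\calP_2$ has rank $2$, its Katz algorithmic construction uses only a single middle convolution applied to a suitable rank-one starting operator of the shape $\vartheta - f(x)$ with $f$ chosen so that the local exponents match those demanded by the local monodromy data $(\bJ(2), (\alpha,\alpha^{-1}), \bJ(2), (1,1))$ at $(0,1,4,\infty)$. Applying the Fourier-transform-plus-twist recipe for $\MC_\lambda$ to this starting operator produces an operator of degree $2$ in $\vartheta$ and degree $3$ in $x$, of the same shape as $P$; a direct comparison of coefficients then identifies $P_0, P_1, P_2, P_3$. As a consistency check, one reads the Riemann scheme off $P$ directly: $P_0(\vartheta) = -4\vartheta^2$ has $0$ as a double root, giving exponents $(0,0)$ at $x = 0$; the indicial polynomial at $\infty$ is $P_3(-\vartheta) = \vartheta(\vartheta-1)$ up to sign, yielding exponents $(1,2)$; and the local indicial equations at $x = 1$ and $x = 4$ are computed from the evaluations $P_0(\vartheta) + P_1(\vartheta) + P_2(\vartheta) + P_3(\vartheta)$ and $P_0(\vartheta) + 4P_1(\vartheta) + 16P_2(\vartheta) + 64P_3(\vartheta)$, yielding $(-a, a-1)$ at $1$ and $(0,0)$ at $4$.

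The main obstacle is the bookkeeping inherent to the Katz-to-operator dictionary: rational prefactors, integer shifts in $\vartheta$, and overall scalar constants accumulate through the middle convolution and twist, and it is easy to be off by a harmless left or right factor such as $(\vartheta - c)$. Since the target operator $P$ is given explicitly, a cleaner organization of the proof is to argue in reverse: verify directly that $P$ has the Riemann scheme displayed above, that its singularities at $0, 1, 4, \infty$ carry the correct Jordan structure (in particular a genuine $\bJ(2)$ at $0$ and at $4$, using $b \in \ZZ$ to exclude logarithmic degeneracies beyond those claimed), and that its parabolic rigidity index is the expected one. By Katz's theorem on rigid local systems, these local data and rigidity together determine the underlying connection up to isomorphism, hence $D/DP \cong \calP_2$.
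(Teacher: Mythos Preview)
Your primary approach --- construct $\calP_2$ via the Katz algorithm as in Proposition~\ref{E2} and translate each step into operators using \cite[Chapter~4]{BR2012} --- is exactly the paper's proof, which the paper states in a single sentence.

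One correction to the alternative ``reverse'' verification you sketch: the indicial polynomial at a finite nonzero regular singularity $x=c$ is \emph{not} obtained by evaluating $\sum_i c^i P_i(\vartheta)$. You must pass to the local parameter $y=x-c$, rewrite the operator in terms of $y$ and $y\partial_y$, and extract the coefficient of the lowest power of $y$; only at $x=0$ and $x=\infty$ does the $\vartheta$-expansion give the indicial polynomial directly. (Also, $P_3(-\vartheta)=(\vartheta-2)(\vartheta-1)$, not $\vartheta(\vartheta-1)$, though the exponents $(1,2)$ you quote are correct.) With those local computations carried out properly, your rigidity-based verification is a valid and arguably cleaner alternative route.
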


% Rsymb(PP);
%                                                            0, [0, 0]
% 
%                                                         1, [-a, a - 1]
% 
%                                                            4, [0, 0]
% 
%                                                        infinity, [1, 2]
% 
% > 

\begin{proof} 
The construction of $P$ follows from the construction of ${{\mathcal P}_2}$ in Proposition~\ref{E2} and
 the translation to operators, cf. \cite[Chapter~4]{BR2012}.
 Hence the claim follows.
\end{proof}

\begin{prop}\label{P:E4}
 Let
 \begin{eqnarray*}
P&=& P_0(\vartheta)+xP_1(\vartheta)+x^2P_2(\vartheta)+x^3P_3(\vartheta) \in \CC[x][\vartheta],
\end{eqnarray*}
\begin{eqnarray*}
   P_0(\vartheta)&=& -16 \vartheta^2 t^2 (t+1)^2 (\vartheta-1) (\vartheta+1)\\
    P_1(\vartheta)&=& 4 \vartheta (t^2+t+1)^2 (\vartheta+1) (2 \vartheta+1)^2\\
    P_2(\vartheta)&=&-8 (t^2+t+1) (2 \vartheta+3) (2 \vartheta+1) (\vartheta+1)^2 \\
    P_3(\vartheta)&=&(2 \vartheta+5) (2 \vartheta+1) (2 \vartheta+3)^2 
   \end{eqnarray*}
 be the symplectic operator of degree $4$ with Riemann scheme
 \[ \begin{array}{ccccc}
     0 & 1& t^2& (t+1)^2 &\infty \\
     \hline
     -1 & 0 & 0&0& 1/2 \\
     0 & 1 & 1& 1& 3/2 \\
     0 & 1& 1& 1& 3/2 \\
     1 & 2 &2 & 2& 5/2
    \end{array}
\]
%                                                        0, [-1, 0, 0, 1]
% 
%                                                         1, [0, 1, 1, 2]
% 
%                                                 infinity, [1/2, 3/2, 3/2, 5/2]
% 
%                                                         2
%                                                        t , [0, 1, 1, 2]
% 
%                                                    2
%                                                   t  + 2 t + 1, [0, 1, 1, 2]

Then $P$  is the operator corresponding to $\calP_4$ in Proposition~\ref{E4}.
\end{prop}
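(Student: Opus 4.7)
The plan is to follow the same two-line strategy used in the proofs of Propositions~\ref{P:b<>0} and \ref{P:b=0}: start from the explicit Katz--Arinkin-style construction of $\calP_4$ given in Proposition~\ref{E4}, namely $\calP_4=\MC_{-1}\calM$ for the rank one regular singular connection $\calM$ on $\PP^1\setminus\{0,1,t^2,(t+1)^2,\infty\}$ with local monodromies $(-1,-1,-1,-1,1)$, and translate this construction step by step into the language of scalar differential operators via the procedure in \cite[Chapter~4]{BR2012}.

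First I would write down a first-order operator $Q\in\CC[x]\langle\vartheta\rangle$ whose associated rank one connection is $\calM$. Because $\calM$ has monodromy $-1$ at each of the four finite points and trivial monodromy at $\infty$ (with exponents $1/2,1/2,1/2,1/2$ and $0$, respectively), $Q$ is determined up to scaling by demanding that its single exponent at each singularity equal these values; this is an entirely mechanical step. Next, apply the operator-level version of the middle convolution functor $\MC_{-1}$ from \cite[Chapter~4]{BR2012}, which takes $Q$ to an operator of degree four in $\vartheta$ with coefficients that are polynomials in $x$ of the prescribed shape $P=P_0(\vartheta)+xP_1(\vartheta)+x^2P_2(\vartheta)+x^3P_3(\vartheta)$. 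The shape of the resulting $P$ is forced by the support of the singularities of $\calP_4$ on $\PP^1$ (finite singularities $0,1,t^2,(t+1)^2$, plus $\infty$) and by the degree of middle convolution, exactly as in Propositions~\ref{P:b<>0} and \ref{P:b=0}.

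Finally I would verify that the $P_i(\vartheta)$ produced by this computation match the four polynomials displayed in the statement, and check consistency with the Riemann scheme by computing the indicial polynomial of $P$ at each singularity: at $x=0$ the indicial equation is $P_0(\vartheta)=0$, giving the exponents $-1,0,0,1$; at $x=\infty$ it is controlled by the leading coefficient of $P_3(\vartheta)$ in $\vartheta$ after the substitution $\vartheta\mapsto -\vartheta$ as in the Fourier/pull-back formulas recalled in the proof of Proposition~\ref{OpEi}, giving $1/2,3/2,3/2,5/2$; and at $x=1,t^2,(t+1)^2$ the exponents $0,1,1,2$ are read off from a local factorization of $P$ after clearing denominators. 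These indicial data agree with the Riemann scheme in the statement and with the local monodromy of $\calP_4$ from Proposition~\ref{E4}.

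The main obstacle is, as in the previous two propositions, simply the bookkeeping of the large polynomial coefficients produced by the middle convolution; conceptually nothing new is needed beyond \cite[Chapter~4]{BR2012} and the already-constructed $\calP_4$, so the proof reduces to a short reference, with the explicit formulas for $P_0,P_1,P_2,P_3$ serving as the output of this computation.
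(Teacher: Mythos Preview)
Your proposal is correct and follows exactly the same approach as the paper: the paper's proof is the two-line reference you anticipate, namely that $P$ is obtained from the construction $\calP_4=\MC_{-1}\calM$ in Proposition~\ref{E4} by translating to operators via \cite[Chapter~4]{BR2012}. Your additional remarks on writing down the rank-one operator for $\calM$, applying the operator-level $\MC_{-1}$, and checking the indicial polynomials at each singularity simply spell out what this translation amounts to, and are consistent with the paper's intent.
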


\begin{proof} 
The construction of $P$ follows from the construction of ${\mathcal P}_4$ in Proposition~\ref{E4} and
 the translation to operators, cf. \cite[Chapter~4]{BR2012}.
 Hence the claim follows.
\end{proof}

%
%
%\begin{cor}
%Let $P$ be as in Proposition~\ref{P:b<>0} or \ref{P:b=0} or \ref{P:E4}.
%Further, let
%\begin{eqnarray*}
% Q=\sum_{i=0}^{2\deg_x(P)}x^i Q_i(\vartheta):=[2]^*P&=&\sum_{i=0}^{\deg_x(P)} x^{2i}P_i(\vartheta/2).
%\end{eqnarray*}
%
%Then
%\begin{eqnarray*}
% FT(Q)&=&\sum_{i=0}^{\deg_x(P)} \delta^{2i} Q_i(-1-\vartheta)  \\
%x^{\deg_x Q} FT(Q)      &=& \sum x^{\deg_x(Q)-i}  (\vartheta-i+1) \cdots \vartheta \cdot Q_i(-1-\vartheta),
%\end{eqnarray*} 
%which factors
%\begin{eqnarray*}
%x^{\deg_x Q} FT(Q)      &=:&  (\vartheta-5)(\vartheta-3)\cdot  Q' \cdot (\vartheta-1), 
%\end{eqnarray*}
%resp.
%\begin{eqnarray*}
%x^{\deg_x Q} FT(Q)      &=:&  (\vartheta-5)\cdot  Q'  
%\end{eqnarray*}
%in
%Proposition~\ref{P:b<>0} and \ref{P:E4}, resp. \ref{P:b=0}.
%
%
%Moreover, the factor $Q'$ is
%up to a twist
%the operator
% $[2]^*L$ in Proposition~\ref{OpEi} resp. Proposition~\ref{OpE4}
%\end{cor}
% 
% 
%\begin{proof} 
%The proof is similar to the proof of Theorem~6.2.1 in \cite{Ka90} and uses 
%  \[ FT(\vartheta)=(-1-\vartheta),\quad FT(x)=\delta,\quad  x^i\delta^i=(\vartheta-i+1) \cdots \vartheta,\quad (\vartheta-1) x=x \vartheta.\]
%A straightforward computation yields the claim.  
% \end{proof} 

%%%%%%%%%%%%%%%%%%%%%%%%%%%%%%%%%%%% Neu mit mc_mu

\begin{remark}
 In the following we make use of the following transformations of operators.
 Let
 \begin{eqnarray*}
  P &=& \sum_{i=0}^{\deg_x(P)} x^{i}P_i(\vartheta).
 \end{eqnarray*}
 Then
 \begin{eqnarray*}
 [2]^*P&=&\sum_{i=0}^{\deg_x(P)} x^{2i}P_i(\vartheta/2)
  \end{eqnarray*}
  and
  \begin{eqnarray*}
 FT(P)&=&\sum_{i=0}^{\deg_x(P)} \delta^{i} P_i(-1-\vartheta),  \\
x^{\deg_x P} FT(P)      &=& \sum x^{\deg_x(P)-i}  (\vartheta-i+1) \cdots \vartheta \cdot P_i(-1-\vartheta).
\end{eqnarray*} 
Here we have used that
  \[ FT(\vartheta)=(-1-\vartheta),\quad FT(x)=\delta,\quad  x^i\delta^i=(\vartheta-i+1) \cdots \vartheta,\quad (\vartheta-1) x=x \vartheta.\]
\end{remark}

\begin{cor} \label{MC P132}
Let $\calP':= \MC_\chi\calP_{13}^2$ for $\chi=\exp(-2\pi i \mu ), \mu \in [0,1), 1-\mu \sim 0$,
 be as in Proposition~\ref{E1E3conv}.
The corresponding operator is
 \begin{eqnarray*}
  P' &=& \sum_{i=0}^3 x^{2i} P_{2i}(\vartheta),
 \end{eqnarray*}
where
\begin{eqnarray*}
 P_0(\vartheta) &=& -4  \vartheta  (\vartheta-5)  (\vartheta-1)  (\vartheta-2)  (\vartheta-3)  (\vartheta-4)  (\vartheta-\mu) \\
 P_2(\vartheta) &=& \vartheta  (\vartheta-1)  (\vartheta-2)  (\vartheta-3)  (\vartheta-\mu+1)  (9  \vartheta^2-18  \vartheta  \mu+12  a^2-16  b^2+9  \mu^2+18  \vartheta-12  a-18  
\mu+16)\\
P_4(\vartheta) &=& -6  \vartheta  (\vartheta-1)  (\vartheta-\mu+3)  (\vartheta-\mu+2)  (\vartheta-\mu+1)  (-\mu+2+2  b+\vartheta)  (-\mu+2-2  b+\vartheta) \\
P_6(\vartheta) &=& (\vartheta-\mu+1)  (-\mu+5+\vartheta)  (\vartheta-\mu+3)  (-\mu+2+2  b+\vartheta)  (-
\mu+2-2  b+\vartheta)  (-\mu+4+2  b+\vartheta)  (-\mu+4-2  b+\vartheta).
\end{eqnarray*}
Its Riemann scheme is
\[ \begin{array}{cccccc}
    -2 & -1 & 0 & 1 &2 & \infty \\
    \hline
    \mu & \mu-a & \mu & \mu-a &  \mu & 2b+4-\mu \\
    5   & \mu+a-1 & 5 & \mu+a-1 & 5 & 2b+2-\mu \\
    4 & 4 & 4 & 4 & 4 & 5-\mu \\
    3 & 3 & 3 & 3 & 3 & 3-\mu \\
    2 & 2 & 2 & 2 & 2 & 1-\mu  \\
    1 & 1 & 1&  1 & 1 & -2b+2-\mu \\
    0 & 0 & 0 & 0 & 0 &  -2b+4-\mu  \\
   \end{array}
   \]
%  Rsymb(conj(QQ,1/x));
%                                                   -2, [0, 1, 2, 3, 4, 5, mu]
% 
%                                            -1, [0, 1, 2, 3, 4, mu - a, -1 + mu + a]
% 
%                                                    0, [0, 1, 2, 3, 4, 5, mu]
% 
%                                             1, [0, 1, 2, 3, 4, mu - a, -1 + mu + a]
% 
%                                                    2, [0, 1, 2, 3, 4, 5, mu]
% 
%                  infinity, [1 - mu, 3 - mu, -mu + 5, -2 b + 2 - mu, -2 b + 4 - mu, 2 b + 2 - mu, 2 b + 4 - mu]

Moreover
\begin{eqnarray*}
 FT(P') &=& \vartheta  (\vartheta-1)  (\vartheta-2)  (\vartheta-3)  (\vartheta-4)  (\vartheta-5)  H, \\
    H &=& \sum_{i=0}^3 x^{2i} H_{2i}(\vartheta), 
\end{eqnarray*}
where
\begin{eqnarray*}
  H_0(\vartheta) &=& -(\mu-4+\vartheta)  (\mu+\vartheta)  (\mu-2+\vartheta)  (\mu-3-2  b+\vartheta)  (\mu-3+2  b+\vartheta)  (\mu-1-2  b+\vartheta)  (\mu-1+2  b+\vartheta) \\
  H_2 (\vartheta)&=& 6  (\mu-2+\vartheta)  (\mu+\vartheta)  (\mu-1+\vartheta)  (\mu-1-2  b+\vartheta)  (\mu-1+2  b+\vartheta) \\
  H_4(\vartheta) &=& -(\mu+\vartheta)  (9  \vartheta^2+18  \vartheta  \mu+12  a^2-16  b^2+9  \mu^2-12  a+7) \\
  H_6 (\vartheta)&=& 4  (\vartheta+\mu+1).
\end{eqnarray*}
This is up to a twist with $x^{\mu+1}$ the operator
$ [2^*]P $ in Proposition~\ref{OpEi}.

\end{cor}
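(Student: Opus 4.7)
The proof is an explicit operator-level computation that follows the construction $\calP' = \MC_\chi \calP_{13}^2$ from Proposition~\ref{E1E3conv}. Starting from the operator $P$ for $\calP_{13}$ given in Proposition~\ref{P:b<>0}, one first forms the Kummer pullback $[2]^*P = \sum x^{2i}P_i(\vartheta/2)$ using the formulas in the preceding Remark, and then applies the operator-level translation of $\MC_\chi$ via Fourier transform and twist by $x^\mu$, cf.~\cite[Chapter~4]{BR2012}. Executing these three steps produces the claimed $P'$. Alternatively, since $\calP'$ is rigid, it suffices to verify that the explicit $P'$ written down is an irreducible operator with the correct Riemann scheme; rigidity then forces the identification with $\MC_\chi\calP_{13}^2$.

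The Riemann scheme is computed singularity by singularity. The singular locus of $P'$ consists of $x = 0$ together with the zeros of the leading-in-$\delta$ coefficient of $P'$, which simplify to $x = \pm 1, \pm 2$. At $x=0$ the local exponents are the roots of $P_0(\vartheta)$; at the other finite singularities one localises near $x = s$, computes the indicial polynomial and reads off the exponents; at infinity one uses the transformation $x^6 [1/x]^*P' = \sum x^{6-i} P_{2i}(-\vartheta)$ recorded in the Remark. Matching the resulting exponents with the table in the statement is a finite verification.

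For the factorization of $FT(P')$, one applies the Fourier-transform rule $FT(P') = \sum \delta^{2i} P_{2i}(-1-\vartheta)$ and rewrites $x^{2i}\delta^{2i}$ as $\vartheta(\vartheta-1)\cdots(\vartheta-2i+1)$. The common factor $\vartheta(\vartheta-1)\cdots(\vartheta-5)$ then extracts cleanly — this corresponds, on the module side, to the trivial subquotient that appears when inverting the Fourier transform of a middle convolution — and leaves an operator $H$ of degree $6$ in both $\vartheta$ and $x$. Comparing with the stated $H_0, H_2, H_4, H_6$ is a polynomial identity check. Finally, to identify $H$ with $[2]^*P$ (for $P$ the operator of Proposition~\ref{OpEi}) up to a twist by $x^{\mu+1}$, one substitutes $\vartheta \mapsto \vartheta - \mu - 1$ in each $H_{2i}$ and matches the result coefficient-by-coefficient with the pullback $[2]^*L = \sum x^{2i} L_i(\vartheta/2)$ of Proposition~\ref{OpEi}; the $L_i$ reappear after the shift by direct polynomial comparison.

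The main obstacle is purely bookkeeping: keeping track of the interaction between the scaling $\vartheta \mapsto \vartheta/2$ from the Kummer pullback, the sign inversion of the Fourier involution, the additive shift by $\mu$ from the twist by $x^\mu$, and the normalization by $\mu+1$ introduced when inverting the Fourier transform. This verification is mechanical — best handled in a computer algebra system — but it is the only substantive calculation required; all structural input comes from rigidity of $\calP'$ and the explicit transformation rules in the Remark.
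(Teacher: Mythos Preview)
Your proposal is correct and follows essentially the same route as the paper: start from the operator $P$ of Proposition~\ref{P:b<>0}, apply the Kummer pullback and the operator-level realization of $\MC_\chi$ via the transformation rules in the Remark and \cite[Chapter~4]{BR2012}, and then verify the Fourier factorization and the match with $[2]^*L$ by direct polynomial comparison. Two small slips to fix in your write-up: the operator $H$ has $\vartheta$-degree $7$ (not $6$), and in the inversion at infinity the exponent should read $x^{6-2i}$ rather than $x^{6-i}$.
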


\begin{proof}
We start with the operator $P$ in Proposition~\ref{P:b<>0}.
The construction of $P'$ follows from the construction of ${\mathcal P}'$ in Proposition~\ref{E1E3} and
 the translation to operators, cf. \cite[Chapter~4]{BR2012}.
 The claim follows by a straightforward calculation using the above remark.
\end{proof}

\begin{cor} \label{MC P22}
Let $\calP':= \MC_{-1}\calP_{2}^2$  be as in Proposition~\ref{E2}.
Then the corresponding operator is
 \begin{eqnarray*}
  P' &=& \sum_{i=0}^3 x^{2i} P_{2i}(\vartheta),
 \end{eqnarray*}
where
\begin{eqnarray*}
P_0(\vartheta) & =&  256    \vartheta  (  \vartheta-1)  (  \vartheta-2)  (  \vartheta-3)  (  \vartheta-4)  (  \vartheta-5)  (2    \vartheta-1)\\
P_2(\vartheta) & =&  -16    \vartheta  (  \vartheta-1)  (  \vartheta-2)  (  \vartheta-3)  (2    \vartheta+1)  (36    \vartheta^2+48  a^2+36    \vartheta-48  a+37) \\
P_4(\vartheta) & =&  24    \vartheta  (2    \vartheta+5)
  (  \vartheta-1)  (2    \vartheta+1)  (2    \vartheta+3)^3 \\
P_6(\vartheta) & =&  -(2    \vartheta+5)  (2    \vartheta+1)  (2    \vartheta+9)  (2    \vartheta+7)^2  (2    \vartheta+3)^2.
\end{eqnarray*}
% 
% H:=[256*X*(X-1)*(X-2)*(X-3)*(X-4)*(X-5)*(2*X-1), 0, -16*X*(X-1)*(X-2)*(X-3)*(2*X+1)*(36*X^2+48*a^2+36*X-48*a+37), 0, 24*X*(2*X+5)
% *(X-1)*(2*X+1)*(2*X+3)^3, 0, -(2*X+5)*(2*X+1)*(2*X+9)*(2*X+7)^2*(2*X+3)^2];
% Fall 2) (b=0 in Fall 1)
% 
% factor(Diffop2CY(Falt(Trans(PP,x^2,[dx,x]),1/2),[dx,x]));
%     factor(Diffop2CY(Falt(Trans(PP,x^2,[dx,x]),1/2),[dx,x]));
% [256 X (X - 1) (X - 2) (X - 3) (X - 4) (X - 5) (2 X - 1), 0,
% 
%                                                  2       2                                                                   3
%     -16 X (X - 1) (X - 2) (X - 3) (2 X + 1) (36 X  + 48 a  + 36 X - 48 a + 37), 0, 24 X (2 X + 5) (X - 1) (2 X + 1) (2 X + 3) ,
% 
%                                                2          2
%     0, -(2 X + 5) (2 X + 1) (2 X + 9) (2 X + 7)  (2 X + 3) ]
% 
%      lprint(%);
% H:=[256*X*(X-1)*(X-2)*(X-3)*(X-4)*(X-5)*(2*X-1), 0, -16*X*(X-1)*(X-2)*(X-3)*(2*X+1)*(36*X^2+48*a^2+36*X-48*a+37), 0, 24*X*(2*X+5)
% *(X-1)*(2*X+1)*(2*X+3)^3, 0, -(2*X+5)*(2*X+1)*(2*X+9)*(2*X+7)^2*(2*X+3)^2];
% > 
%  Rsymb(Falt(Trans(PP,x^2,[dx,x]),1/2),[dx,x]); 
%                                                   -2, [0, 1/2, 1, 2, 3, 4, 5]
% 
%                                             -1, [0, 1, 2, 3, 4, -a + 1/2, a - 1/2]
% 
%                                                   0, [0, 1/2, 1, 2, 3, 4, 5]
% 
%                                              1, [0, 1, 2, 3, 4, -a + 1/2, a - 1/2]
% 
%                                                   2, [0, 1/2, 1, 2, 3, 4, 5]
% 
%                                          infinity, [1/2, 3/2, 3/2, 5/2, 7/2, 7/2, 9/2]
% 
% > 
Its Riemann scheme is
\[ \begin{array}{cccccc}
    -2 & -1 & 0 & 1 &2 & \infty \\
    \hline
    1/2 & 1/2+a & 1/2 & 1/2+a &  1/2 & 9/2 \\
    5   & -1/2-a & 5 & -1/2-a & 5&7/2 \\
    4 & 4 & 4 & 4 & 4 & 7/2 \\
    3 & 3 & 3 & 3 & 3 & 5/2 \\
    2 & 2 & 2 & 2 & 2 & 3/2  \\
    1 & 1 & 1&  1 & 1 & 3/2 \\
    0 & 0 & 0 & 0 & 0 &  1/2 \\
   \end{array}
   \]
% 
%  
%  lprint(%);
% [(2*X-3)*(2*X+1)*(2*X-7)*(2*X-5)^2*(2*X-1)^2*X*(X-1)*(X-2)*(X-3)*(X-4)*(X-5), 0, -24*(X+1)*(2*X-3)*(X+2)*(2*X+1)*(2*X-1)^3*X*(
% X-1)*(X-2)*(X-3), 0, 16*(X+1)*(X+2)*(X+3)*(X+4)*(2*X+1)*(36*X^2+48*a^2+36*X-48*a+37)*X*(X-1), 0, -256*(X+1)*(X+2)*(X+3)*(X+4)*
% (X+5)*(X+6)*(2*X+3)]
Thus
$FT(P')$ is
\begin{eqnarray*}
 FT(P') &=& \vartheta  (\vartheta-1)  (\vartheta-2)  (\vartheta-3)  (\vartheta-4)  (\vartheta-5)  H, \\
    H &=& \sum_{i=0}^3 x^{2i} H_{2i}(\vartheta), 
\end{eqnarray*}
where
\begin{eqnarray*}
 H_0(\vartheta) &=& (2    \vartheta-3)  (2    \vartheta+1)  (2    \vartheta-7)  (2    \vartheta-5)^2  (2    \vartheta-1)^2 \\
 H_2(\vartheta) &=&-24  (2    \vartheta-3)  (2    \vartheta+1)  (2    \vartheta-1)^3\\
 H_4(\vartheta) &=&16  (2    \vartheta+1)  (36    \vartheta^2+48  a^2+36    \vartheta-48  a+37)\\
 H_6(\vartheta)  &=& -256  (2    \vartheta+3).
\end{eqnarray*}
This is up to a twist with $x^{3/2}$ the operator
$ [2^*]P $ in Proposition~\ref{OpEi}.
% 
% (X-5)*(X-4)*(X-3)*(X-2)*(X-1)*X *H,
% 
% H:=[(2*X-3)*(2*X+1)*(2*X-7)*(2*X-5)^2*(2*X-1)^2, 0, -24*(2*X-3)*(2*X+1)*(2*X-1)^3, 0, 
% 16*(2*X+1)*(36*X^2+48*a^2+36*X-48*a+37), 0, -256*(2*X+3)];
%  degree(HH,dx);
%                                                                7
% 
% > Caltest(HH);
%                                                                0
% 
% > degree(exterior_power(HH,2),dx);
%                                                               14
% 
% > 
\end{cor}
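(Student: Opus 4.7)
The proof will parallel that of Corollary \ref{MC P132}. The plan is to start from the degree-$4$ symplectic operator $P$ for $\calP_2$ exhibited in Proposition \ref{P:b=0}, push it through the Kummer pullback $[2]^\ast$, then through the middle convolution $\MC_{-1}$, and finally through the Fourier transform, each time using the explicit operator-level translation of these functors described in \cite[Chapter~4]{BR2012} together with the substitution formulas recorded in the remark immediately preceding Corollary \ref{MC P132}.

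First I would apply the Kummer pullback formula $[2]^\ast P=\sum_i x^{2i}P_i(\vartheta/2)$ to the operator $P$ of Proposition \ref{P:b=0}. Because $\calP_2$ has four finite singularities after pullback at $\pm 1, \pm 2$ (together with $0$ and $\infty$), the result is a degree-$6$ operator in $x$ whose coefficients are polynomials in $\vartheta$, and one can immediately read off the pulled-back Riemann scheme with the Kummer indices halved. Next, $\MC_{-1}$ at the level of operators is implemented by the standard recipe (cf. \cite[Chapter~4]{BR2012}): write $[2]^\ast P=\sum x^i Q_i(\vartheta)$, form the auxiliary operator with index shift corresponding to $\mu=1/2$, and simplify using the identities $x^i\delta^i=(\vartheta-i+1)\cdots \vartheta$ and $(\vartheta-1)x=x\vartheta$. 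A direct computation then produces the displayed $P_0,P_2,P_4,P_6$; one confirms the Riemann scheme by inspecting the leading $\vartheta$-behavior of $P_0$ at $x=0$, the behavior at the apparent singularities $x=\pm 1,\pm 2$ coming from the pullback of $1$ and $4$, and the behavior at infinity (which is encoded by the Newton polygon of $P'$ and the factor $P_6$).

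For the Fourier transform one uses $FT(\vartheta)=-1-\vartheta$ and $FT(x)=\delta$ to write $FT(P')=\sum \delta^{2i}P_{2i}(-1-\vartheta)$, then multiplies by $x^6$ on the left and uses the identity from the remark to convert $x^{6-2i}\delta^{6-2i}$ into falling factorials in $\vartheta$. The factor $\vartheta(\vartheta-1)\cdots(\vartheta-5)$ appears as a common left factor because the pullback $[2]^\ast$ collapses the degree-$6$ Fourier-dual polynomial expansion into alternating zero coefficients, exactly as in the proof of Corollary~\ref{MC P132}. The quotient $H$ is then a degree-$6$ operator in $x$, and direct simplification gives the stated $H_0,H_2,H_4,H_6$.

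Finally, to identify $H$ with $[2]^\ast$ of the operator $L$ from Proposition \ref{OpEi} (in the case $b\in\ZZ$, specialising to ${\mathcal E}_2$) up to a twist by $x^{3/2}$, I would apply the substitution $\vartheta\mapsto \vartheta-3/2$ (which is the operator-level incarnation of multiplying by $x^{3/2}$) to $H$ and compare coefficient by coefficient with $[2]^\ast L = \sum x^{2i} L_i(\vartheta/2)$ evaluated with the appropriate specialisation of parameters ($b\in\ZZ$ forcing $\alpha=1$ in the local monodromy at $1$ as in Proposition \ref{E2}). The main obstacle is purely bookkeeping: one must track the falling factorials generated by the Fourier transform carefully, and ensure that the twist exponent $3/2$ (as opposed to $\mu+1$ in Corollary \ref{MC P132}, where $\mu$ is generic) correctly matches the extra factor of $2\vartheta+3$ patterns visible in the $H_{2i}$. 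Everything else reduces to the formal machinery that was already invoked in the preceding corollary, so no new conceptual ingredient is required.
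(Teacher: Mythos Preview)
Your approach is essentially the same as the paper's, which simply says the proof is similar to that of Corollary~\ref{MC P132}: start from the operator $P$ of Proposition~\ref{P:b=0}, apply the pullback and middle convolution via the operator dictionary of \cite[Chapter~4]{BR2012} and the remark, then Fourier transform and factor. Two small slips to correct: the operator $P$ in Proposition~\ref{P:b=0} is rank~$2$ (degree~$2$ in~$\vartheta$), not degree~$4$; and the specialisation $b\in\ZZ$ does \emph{not} force $\alpha=1$ --- the parameter $a$ (hence $\alpha$) remains free in $\calE_2$, while $b\in\ZZ$ is what collapses the $b$-dependent factors of $L$ in Proposition~\ref{OpEi} to the form matching $\calP_2$.
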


\begin{proof}
 The proof is similar to the above one.
\end{proof}

% 
% \begin{cor} \label{MC P42}
% Let $P$ be as in Proposition~\ref{P:E4}.
% Further, let
% \begin{eqnarray*}
%  Q=\sum_{i=0}^{2\deg_x(P)}x^i Q_i(\vartheta):=[2]^*P&=&\sum_{i=0}^{\deg_x(P)} x^{2i}P_i(\vartheta/2).
% \end{eqnarray*}
% 
% Then
% \begin{eqnarray*}
%  FT(Q)&=&\sum_{i=0}^{\deg_x(P)} \delta^{2i} Q_i(-1-\vartheta)  \\
% x^{\deg_x Q} FT(Q)      &=& \sum x^{\deg_x(Q)-i}  (\vartheta-i+1) \cdots \vartheta \cdot Q_i(-1-\vartheta),
% \end{eqnarray*} 
% which factors as
% \begin{eqnarray*}
% x^{\deg_x Q} FT(Q)      &=:&  (\vartheta-5)\cdot  Q'  
% \end{eqnarray*}
% in
% Proposition~\ref{P:b=0}.
% Moreover, the factor $Q'$ is
% up to a twist
% the operator
%  $[2]^*L$ in Proposition~\ref{OpE4}
% \end{cor}
%  
%  
% 
% \begin{proof}
%  The proof is again similar to the above one.
% \end{proof}
% 

\begin{cor} \label{MC P42}
Let $\calP':= \MC_{-1}\calP_{4}^2$  be as in Proposition~\ref{E4}.
Then the corresponding operator is
 \begin{eqnarray*}
  P' &=& \sum_{i=0}^3 x^{2i} P_{2i}(\vartheta),
 \end{eqnarray*}
where
\begin{eqnarray*}
 P_0(\vartheta) & =& -64 \vartheta t^2 (t+1)^2 (\vartheta-1) (\vartheta-2) (\vartheta-3) (\vartheta-4) (\vartheta-5) (2 \vartheta-5) \\
 P_2(\vartheta) & =& 16 \vartheta (t^2+t+1)^2 (\vartheta-1) (\vartheta-2) (\vartheta-3) (2 \vartheta-3)^3 \\
 P_4(\vartheta) & =& -8 \vartheta (t^2+t+1) (2 \vartheta-1) (\vartheta-1) (2 \vartheta+1)^2 (2 \vartheta-3)^2 \\
 P_6(\vartheta) & =&   (2 \vartheta+5)^2 (2 \vartheta-3)^2 (2 \vartheta+1)^3\\
\end{eqnarray*}

Its Riemann scheme is
\[ \begin{array}{cccccc}
    \pm (t+1) & \pm t & \pm 1 & 0  & \infty \\
    \hline
     5 & 5 & 5 & 5 & 5/2 \\
      4 & 4 & 4 & 4 & 5/2 \\
       3 & 3 & 3 & 3 & 1/2 \\
        5/2 & 5/2 & 5/2 & 5/2 & 1/2 \\
        2 & 2 & 2 & 2 & 1/2 \\
        1 & 1 & 1 & 1 & -3/2 \\
         0 & 0 & 0 & 0 & -3/2 \\
    \end{array}
\]    

Thus
$FT(P')$ is
\begin{eqnarray*}
 FT(P') &=& \vartheta  (\vartheta-1)  (\vartheta-2)  (\vartheta-3)  (\vartheta-4)  (\vartheta-5)  H, \\
    H &=& \sum_{i=0}^3 x^{2i} H_{2i}(\vartheta), 
\end{eqnarray*}
where
\begin{eqnarray*}
 H_0(\vartheta) &=& -(2 \vartheta+5)^2 (2 \vartheta-3)^2 (2 \vartheta+1)^3 \\
 H_2(\vartheta) &=& 8 (t^2+t+1) (3+2 \vartheta) (2 \vartheta+1)^2 (2 \vartheta+5)^2\\
 H_4(\vartheta) &=& -16 (t^2+t+1)^2 (2 \vartheta+5)^3\\
 H_6(\vartheta)  &=& 64 t^2 (t+1)^2 (7+2 \vartheta).
\end{eqnarray*}

This is up to a twist with $x^{5/2}$ the operator
$ [2^*]P $ in Proposition~\ref{OpE4}.
\end{cor}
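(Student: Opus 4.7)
The plan is to mirror the strategy of Corollary~\ref{MC P132} and Corollary~\ref{MC P22}, carrying the computations through at the level of differential operators using the three transformation rules collected in the Remark: the Kummer pullback $[2]^*P=\sum_i x^{2i}P_i(\vartheta/2)$, the Fourier transform $FT$, and the identity $x^i\delta^i=(\vartheta-i+1)\cdots\vartheta$. Starting from the operator $P$ attached to $\calP_4$ in Proposition~\ref{P:E4}, I would first apply $[2]^*$ by replacing $x$ with $x^2$ and $\vartheta$ with $\vartheta/2$; this yields an operator of $x$-degree $6$, whose singularities now lie at $0,\pm 1,\pm t,\pm(t+1),\infty$ with a doubled exponent structure and whose finite local monodromies are all unipotent of the same type as those of $\calP_4$.

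Next I would implement the middle convolution $\MC_{-1}$ on operators. Concretely, $\MC_{-1}$ is realized by Fourier transforming, twisting by the rank one module with monodromy $-1$ at $0$, and Fourier transforming back; at the level of operators this corresponds to the substitutions dictated by the Remark, combined with the shift of $\vartheta$ induced by the twist. Executing this procedure on $[2]^*P$ and rearranging indices reproduces the asserted formulas for $P_0(\vartheta),\ldots,P_6(\vartheta)$. Reading the indicial polynomial $P_0$ gives the finite exponents and reading the $\vartheta$-coefficient in $P_6$ (after normalizing) gives the exponents at $\infty$; together these determine the stated Riemann scheme, and consistency with Proposition~\ref{E4} (symplectic rank $4$ input, resulting rank $7$ with an orthogonal monodromy of Hodge length $3$ and a unipotent Jordan block of size $3$ at $\infty$) serves as a sanity check.

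To obtain $FT(P')$ I would simply apply $FT(P')=\sum_i \delta^{2i} P_{2i}(-1-\vartheta)$ and convert every $x^{2i}\delta^{2i}$ via $x^i\delta^i=(\vartheta-i+1)\cdots\vartheta$. Because $P'$ is even in $x$, each such conversion produces factors $(\vartheta)(\vartheta-1)\cdots(\vartheta-2i+1)$, and extracting the common factor $\vartheta(\vartheta-1)(\vartheta-2)(\vartheta-3)(\vartheta-4)(\vartheta-5)$ on the left leaves the reduced operator $H$. A direct polynomial manipulation identifies the resulting $H_0,H_2,H_4,H_6$ with the formulas stated in the corollary.

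Finally, to check that $H$ equals $[2]^*L$ twisted by $x^{5/2}$ for the operator $L$ of Proposition~\ref{OpE4}, I would apply the pullback rule $[2]^*L=\sum_i x^{2i}L_i(\vartheta/2)$ to that $L$, then shift $\vartheta\mapsto\vartheta+5/2$ (the effect of a twist by $x^{5/2}$ on the conjugated operator), and compare coefficient by coefficient with $H_0,H_2,H_4,H_6$; the match is a straightforward verification. The only genuine difficulty is the bookkeeping of a moderately large polynomial computation, which can be carried out by hand or mechanically with a computer algebra system; conceptually there is no obstacle beyond tracking how the shifts of $\vartheta$ introduced by pullback, convolution, and twist combine.
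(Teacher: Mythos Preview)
Your proposal is correct and follows essentially the same approach as the paper: the paper's proof simply says ``The proof is again similar to the above one,'' referring back to Corollary~\ref{MC P132}, whose proof in turn starts from the operator $P$ of Proposition~\ref{P:E4}, translates the construction of $\calP'$ to operators via \cite[Chapter~4]{BR2012}, and carries out the straightforward calculation using the transformation rules in the Remark. Your explicit description of the steps (Kummer pullback, realizing $\MC_{-1}$ via Fourier--twist--Fourier, computing $FT(P')$, and matching against $[2]^*L$ twisted by $x^{5/2}$) is precisely what that ``straightforward calculation'' entails.
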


\begin{proof}
 The proof is again similar to the above one.
\end{proof}

% 
% Faltung_mit_mu.maple
%  lprint(%);
% [-64*X*t^2*(t+1)^2*(X-1)*(X-2)*(X-3)*(X-4)*(X-5)*(2*X-5), 0, 16*X*(t^2+t+1)^2*(X-1)*(X-2)*(X-3)*(2*X-3)^3, 0, -8*X*(t^2+t+1)*(
% 2*X-1)*(X-1)*(2*X+1)^2*(2*X-3)^2, 0, (2*X+5)^2*(2*X-3)^2*(2*X+1)^3]
%       Rsymb(conj(HH,1/x));
%                                                   -1, [0, 1, 2, 5/2, 3, 4, 5]
% 
%                                                   0, [0, 1, 2, 5/2, 3, 4, 5]
% 
%                                                   1, [0, 1, 2, 5/2, 3, 4, 5]
% 
%                                                   t, [0, 1, 2, 5/2, 3, 4, 5]
% 
%                                         infinity, [-3/2, -3/2, 1/2, 1/2, 1/2, 5/2, 5/2]
% 
%                                                   -t, [0, 1, 2, 5/2, 3, 4, 5]
% 
%                                                 -t - 1, [0, 1, 2, 5/2, 3, 4, 5]
% 
%                                                 t + 1, [0, 1, 2, 5/2, 3, 4, 5]
% 
% 
% lprint(%);
% [-(2*X+5)^2*(2*X-3)^2*(2*X+1)^3*X*(X-1)*(X-2)*(X-3)*(X-4)*(X-5), 0, 8*(X+1)*(t^2+t+1)*(3+2*X)*(2+X)*(2*X+1)^2*(2*X+5)^2*X*(X-1
% )*(X-2)*(X-3), 0, -16*(X+1)*(t^2+t+1)^2*(2+X)*(3+X)*(4+X)*(2*X+5)^3*X*(X-1), 0, 64*(X+1)*t^2*(t+1)^2*(2+X)*(3+X)*(4+X)*(5+X)*(
% 6+X)*(7+2*X)]
% >  
% 
%  =X*(X-1)*(X-2)*(X-3)*(X-4)*(X-5)*H1
%  
%  H1:=[-(2*X+5)^2*(2*X-3)^2*(2*X+1)^3, 0, 8*(t^2+t+1)*(3+2*X)*(2*X+1)^2*(2*X+5)^2, 0,
%  -16*(t^2+t+1)^2*(2*X+5)^3, 0, 64*t^2*(t+1)^2*(7+2*X)];
%  

%%%%%%%%%%%%%%%%%%%%%%%%%%%%%%%%%%%%%%%

Denote by $\calD$ the sheaf of differential operators on $\AA^1$ and by $\calD_{\Gm}$ its restriction to $\Gm$. Let $M_{13}:=\calD_{\Gm}/\calD_{\Gm}P'$ for $P'$ the operator in Corollary \ref{MC P132}. Similarly define $M_2$ using Corollary \ref{MC P22} and $M_4$ using Corollary \ref{MC P42}. Denoting by $S$ the singular locus of $P'$ the restriction of $M_{13}$ to $\PP^1\setminus S$ is isomorphic  to $\MC_\chi(\calP_{13}^2)$ and similarly in the other cases.

\begin{cor} \label{remstabi} Let $j:\Gm\hookrightarrow \AA^1$ be the open inclusion and denote by $K_\chi$ (and $K_{-1}$) the Kummer sheaves with monodromy $\chi$ at $0$ (and $-1$ at $0$). We have 
\begin{align*}
FT(j_{!*}M_{13})&\cong j_{!*}([2]^*\calE_1\otimes K_\chi), b\in1/2+\ZZ,\\
FT(j_{!*}M_{13})&\cong j_{!*}([2]^*\calE_3\otimes K_\chi), b\in \RR \setminus \{\ZZ \cup 1/2+\ZZ\} \\
FT(j_{!*}M_2) &\cong j_{!*}([2]^*\calE_2\otimes K_{-1}), \\
FT(j_{!*}M_4) &\cong j_{!*}([2]^*\calE_4\otimes K_{-1}).
\end{align*}
\end{cor}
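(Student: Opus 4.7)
The plan is to leverage the explicit operator-level Fourier transforms computed in Corollaries \ref{MC P132}, \ref{MC P22} and \ref{MC P42}, combined with an irreducibility argument. First I would recall the standard dictionary between cyclic $\calD$-modules on $\AA^1$ and differential operators: if $M=\calD_{\AA^1}/\calD_{\AA^1}P$, then $FT(M)=\calD_{\AA^1}/\calD_{\AA^1}FT(P)$, and multiplying $P$ (on the appropriate side) by $x^c$ corresponds, after restriction to $\Gm$, to tensoring the associated connection with the rank one Kummer connection $K_\chi$ for $\chi=\exp(-2\pi i c)$.

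Next I would observe that $j_{!*}M_{13}$ is an irreducible holonomic $\calD_{\AA^1}$-module: the connection $\MC_\chi\calP_{13}^2$ on the open locus is irreducible by construction, hence so is its intermediate extension. Fourier transform preserves irreducibility of holonomic $\calD$-modules on $\AA^1$, so $FT(j_{!*}M_{13})$ is again irreducible, and is therefore uniquely determined as the intermediate extension of its restriction to any dense open subset on which it is a connection. The right-hand side $j_{!*}([2]^*\calE_1\otimes K_\chi)$ is irreducible for the same reason. Thus it suffices to identify the two sides after restriction to $\Gm$.

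Now I would invoke Corollary \ref{MC P132}, which factors $FT(P')=\vartheta(\vartheta-1)\cdots(\vartheta-5)\cdot H$, where $H$ equals the operator $[2]^*L$ of Proposition \ref{OpEi} up to the twist by $x^{\mu+1}$. The left factor has constant coefficients in $x$ and its associated cyclic module has horizontal sections spanned by $1,x,\ldots,x^5$, so it contributes only a subobject generically built from translates of the trivial connection. The irreducible subquotient whose generic rank matches that of $\MC_\chi\calP_{13}^2$ is therefore precisely $\calD/\calD H$, and, under the dictionary above, the $x^{\mu+1}$-twist translates the passage from $H$ to $[2]^*L$ into tensoring with $K_\chi$. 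Taking intermediate extensions yields the first claimed isomorphism. The remaining cases follow verbatim from the corresponding factorisations in Corollaries \ref{MC P22} and \ref{MC P42}, noting that the twists $x^{3/2}$ and $x^{5/2}$ there both correspond to the Kummer sheaf $K_{-1}$.

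The main obstacle is the clean elimination of the extraneous constant-coefficient factor in $\vartheta$, which in principle could contribute apparent singularities at $0$. The cleanest way to handle it is to rely on the irreducibility of both sides of the desired isomorphism and compare them only on a Zariski-dense open set; the Riemann schemes listed in Corollaries \ref{MC P132}, \ref{MC P22} and \ref{MC P42} show that the local formal data at every singularity of the operator $H$ match those of $[2]^*\calE_i\otimes K_\chi$, so that irreducibility of both sides together with agreement of their generic restrictions forces the isomorphism of the intermediate extensions.
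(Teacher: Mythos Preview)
Your irreducibility-based strategy is sound in outline and genuinely different from the paper's, but there is a gap in the step where you pin down $FT(j_{!*}M_{13})$ inside $\calD/\calD\,FT(P')$. You appeal to ``generic rank matching that of $\MC_\chi\calP_{13}^2$'', but Fourier transform does not preserve generic rank, so this is not a legitimate criterion as stated. What actually makes the identification work is: $j_{!*}M_{13}$ is a subquotient of $\calD/\calD P'$ (being sandwiched between $j_!$ and $j_*$), hence $FT(j_{!*}M_{13})$ is an irreducible subquotient of $\calD/\calD\,FT(P')$; on $\Gm$ the Jordan--H\"older factors of the latter are six copies of the trivial connection (from $\vartheta(\vartheta-1)\cdots(\vartheta-5)$) together with the irreducible connection defined by $H$; since $FT(j_{!*}M_{13})$ is neither punctual nor constant (else $j_{!*}M_{13}$ would be constant or punctual), its restriction to $\Gm$ is forced to be the $H$-piece. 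Your final paragraph gestures at this, but the phrase ``agreement of their generic restrictions'' is precisely what you are trying to establish, so as written the argument is circular.

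The paper takes a more direct route, avoiding irreducibility and Jordan--H\"older analysis entirely, via two lemmas of Katz from \emph{Exponential Sums and Differential Equations}. Since the local exponents of $P'$ at $0$ (read off from the Riemann scheme in Corollary~\ref{MC P132}) lie outside $\ZZ_{<0}$, Lemma~2.9.5 there gives $\calD/\calD P'\cong j_!M_{13}$, producing an exact sequence
\[
0\longrightarrow \delta_0\otimes\textup{Soln}_0^\vee \longrightarrow \calD/\calD P' \longrightarrow j_{!*}M_{13}\longrightarrow 0.
\]
Fourier transforming and identifying $FT(\delta_0\otimes\textup{Soln}_0^\vee)$ with $\calD/\calD(\vartheta(\vartheta-1)\cdots(\vartheta-5))$ yields $FT(j_{!*}M_{13})\cong\calD/\calD H$ on the nose. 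Finally, since the indicial polynomial $H_0$ of $H$ at $0$ has no integer roots, Lemma~2.9.4 gives $\calD/\calD H\cong j_{!*}j^*(\calD/\calD H)$, which by Corollary~\ref{MC P132} is $j_{!*}([2]^*\calE_1\otimes K_\chi)$. This computation also explains the ``extraneous'' factor $\vartheta(\vartheta-1)\cdots(\vartheta-5)$: it is exactly the Fourier transform of the skyscraper at $0$ measuring the difference between $j_!$ and $j_{!*}$. Your approach, once the gap is filled as above, trades these exponent checks for a soft categorical argument; the paper's approach is more explicit and requires no irreducibility input.
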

\begin{proof} Again the proof is similar to the proof of Theorem 6.2.1 in \cite{Ka90}. We will only prove the first case as the others are similar.  None of the local exponents of $P'$ are contained in $\ZZ_{<0}$. We may therefore apply Lemma 2.9.5 in \cite{Ka90} to find that 
\[\calD/\calD P' \cong j_!\calD_{\Gm}/\calD_{\Gm}P'.\]
Therefore we have a short exact sequence
\[0\rightarrow \delta_0 \otimes \textup{Soln}_0^\vee \rightarrow \calD/\calD P' \rightarrow j_{!*} \calD_{\Gm}/\calD_{\Gm}P'\rightarrow 0 \]
where $\textup{Soln}_0^\vee$ denotes dual space of the formal local solutions at $0$ of $\calD/\calD P$. Applying Fourier transform we obtain the short exact sequence
\[0\rightarrow FT( \delta_0 \otimes \textup{Soln}_0^\vee) \rightarrow \calD/\calD FT(P')   \rightarrow  FT(j_{!*} \calD_{\Gm}/\calD_{\Gm}P')\rightarrow 0. \]
By Corollary \ref{MC P132} we have $FT(P')=\vartheta  (\vartheta-1)  (\vartheta-2)  (\vartheta-3)  (\vartheta-4)  (\vartheta-5)  H$. Note that 
\[FT( \delta_0 \otimes \textup{Soln}_0^\vee) = \calD / \calD (\vartheta  (\vartheta-1)  (\vartheta-2)  (\vartheta-3)  (\vartheta-4)  (\vartheta-5)).\]
Therefore  we have
\[FT(j_{!*} \calD_{\Gm}/\calD_{\Gm}P')\cong \calD / \calD H. \]
Recall from Corollary \ref{MC P132} that $H=\sum_{i=0}^3 x^{2i} H_{2i}(\theta)$. Then $H_0(X)$ considered as a polynomial in $X$ is the indicial polynomial of $H$ at $0$. Since it has no integer roots we may apply  Lemma 2.9.4 in \cite{Ka90} to conclude that 
\[\calD/ \calD H \cong j_{!*}j^* \calD_{\Gm}/\calD_{\Gm} H \cong j_{!*}([2]^*\calE_1\otimes K_\chi). \]

\end{proof}
Combining Corollary \ref{remstabi} with the computations of Hodge data for $\MC_\chi(\calP_{13}^2)$, $\MC_\chi(\calP_{2}^2)$ and $\MC_\chi(\calP_{4}^2)$ we now compute the irregular Hodge filtration of the rigid irregular $G_2$-connections. 
\begin{proof}[{Proof of Theorem \ref{thm: irreg hodge}}]By \cite[Theorem 0.3]{Sa18} pullback by a smooth morphism does not change the ranks and jumping indices of the irregular Hodge filtration. Additionally by \cite[Lemma 2.3]{SY19} twisting with a local system of rank one only changes the indices of the Hodge filtration by a global shift. We can therefore compute the irregular Hodge filtration of $\calE_{1}$ by computing it for the twist of $[2]^*\calE_{11}$. Note that we have arranged the local system $\MC_\chi(\calP_{13}^2)$  (where $b\in 1/2+\ZZ$) such that the local monodromy at $\infty$ does not have an eigenvalue equal to $1$. We can therefore apply the stationary phase formula \cite[Section 5, (7)]{SY19} to conclude that 
\[\dim \gr_{F_\textup{irr}}^{\alpha+p} (\calE_1)=\dim \gr_{F_\textup{irr}}^{\alpha+p} ([2]^*\calE_1\otimes K_\chi))= \nu_{\infty, \alpha}^p(\MC_\chi(\calP_{13}^2))= \sum_{\ell \ge 0} \sum_{k=0}^\ell \nu_{\infty, \alpha, \ell}^{p+k}(\MC_\chi(\calP_{13}^2)). \]
The other cases work the same and a straightforward calculation proves Theorem \ref{thm: irreg hodge}. 
\end{proof}

\bibliographystyle{alpha}
\bibliography{mybib}

\end{document}